\newcommand\invnu{\mathcal{V}}
\newcommand{\abcr}{\operatorname{cr}^{\mathfrak{A}}}
\DeclareMathOperator{\Aut}{Aut}
\DeclareMathOperator{\Cent}{C}
\DeclareMathOperator{\centre}{Z}
\DeclareMathOperator{\cohom}{H}
\DeclareMathOperator{\diag}{diag}
\DeclareMathOperator{\dg}{d}
\DeclareMathOperator{\End}{End}
\DeclareMathOperator{\GL}{GL}
\DeclareMathOperator{\indmin}{i}
\DeclareMathOperator{\maxi}{m}
\DeclareMathOperator{\OLandau}{O}
\DeclareMathOperator{\Orth}{O}
\DeclareMathOperator{\PSL}{PSL}
\DeclareMathOperator{\PSp}{PSp}
\DeclareMathOperator{\rk}{r}
\DeclareMathOperator{\rkm}{rm}
\DeclareMathOperator{\rko}{ro}
\DeclareMathOperator{\rks}{rs}
\DeclareMathOperator{\Soc}{Soc}
\DeclareMathOperator{\Sym}{Sym}
\newtheorem{theorem}{Theorem}[section]
\newtheorem{theorema}{Theorem}
\newtheorem{lemma}[theorem]{Lemma}
\newtheorem{corollary}[theorem]{Corollary}
\theoremstyle{definition}
\newtheorem{example}[theorem]{Example}
\newtheorem{definition}[theorem]{Definition}
\newtheorem{construction}[theorem]{Construction}
\theoremstyle{remark}
\newtheorem{remark}[theorem]{Remark}
\title{Bounds on the number of maximal subgroups %
  of finite groups}
\author{A. Ballester-Bolinches\thanks{Department of Mathematics, Guandong University of Education, Guangzhou, Guangdong, 510303, People's Republic of China}\ \thanks{Departament de Matem\`atiques,
    Universitat de Val\`encia, Dr.\ Moliner, 50, 46100 Burjassot,
    Val\`encia, Spain, \texttt{Adolfo.Ballester@uv.es}, \texttt{Ramon.Esteban@uv.es}. ORCID 0000-0002-2051-9075, 0000-0002-2321-8139} \and R. Esteban-Romero\addtocounter{footnote}{-1}\footnotemark \and P. Jim\'enez-Seral\thanks{Departamento de
    Matem\'aticas, Universidad de Zaragoza, Pedro Cerbuna, 12, 50009 Zaragoza, Spain,
    \texttt{paz@unizar.es}. ORCID 0000-0003-4809-1784}}
\date{}
\begin{document}

\maketitle
\begin{abstract}
  In this paper we obtain significant bounds for the number of
  maximal subgroups 
  of a given index of a finite group. %
  These results allow us to give new
  bounds for the number of random generators needed to generate a
  finite $d$-generated group with high probability.%

  \emph{Keywords:} finite group, maximal subgroup, probabilistic
  generation, primitive group

  \emph{Mathematics Subject Clasification (2020):} 20P05, 20E07, 20E28
\end{abstract}
\section{Introduction}
All groups considered in this paper will be finite.

The fact that an $r$-tuple of elements of a group $G$ does not generate $G$ is equivalent to the fact that all these elements belong to a certain maximal subgroup $M$ of~$G$. The probability that a certain $r$-tuple of elements of $G$ chosen at random and with a uniform probability distribution is contained in a maximal subgroup $M$ of~$G$ is $1/\lvert G:M\rvert^r$. This makes interesting to consider the number $\maxi_n(G)$ of
maximal subgroups of a group $G$ of index~$n$ for a natural number~$n$ in problems related with the probability that a certain tuple of elements of the group generates it.

Pak \cite{Pak99-prob}, motivated by potential
applications for the product replacement algorithm, widely used to
generate random elements in a finitely generated group, analysed an invariant related to the probability a tuple generates a group.
\begin{definition}
  Given a group $G$, we denote by $\invnu(G)$ the least positive
  integer $k$ such that $G$ is generated by $k$ random elements with
  probability at least  $1/\mathrm{e}$.
\end{definition}
Pak conjectured that
$\invnu(G)={\OLandau(\dg(G)\log\log\lvert G\rvert)}$, where $\dg(G)$ denotes the smallest cardinal of a generating set for~$G$. Here and in all this paper,
the symbol $\log$ will be used to denote the logarithm to the
base~$2$, and we follow the convention that $\log 0=-\infty$.

Given a maximal subgroup $M$ of a group $G$, the quotient $G/M_G$ of
$G$ by the normal core $M_G$ of $M$ in $G$ is a primitive group. According
to a well-known theorem of Baer \cite{Baer57} (see also
\cite[Theorem~1.1.7]{BallesterEzquerro06}), $G/M_G$ is of one of three
different
types. In this case, we say that the maximal $M$ is of the same type as
the primitive group $G/M_G$ it induces, like in
\cite[Definition~1.1.19]{BallesterEzquerro06}. Lubotzky \cite{Lubotzky02} established bounds on the number of maximal subgroups of type~1 of a given index~$n$ and on the number of maximal subgroups of types~2 and~3 of index~$n$ by considering the number of abelian and non-abelian chief factors, respectively, in a given chief series of the group. He obtained the following result that confirms the validity of Pak's conjecture.
\begin{theorem}[Lubotzky, {\cite[Corollary~2.6]{Lubotzky02}}]\label{th-Lubotzky-mn}
  If $G$ is a group with $r$ chief factors in a given chief
  series, $r_a$ of them abelian and $r_b$ of them non-abelian, then 
  \[\maxi_n(G)\le \bigl(\tfrac{1}{2}(r_b+1)r_b+r_an^{\dg(G)}\bigl)n^2\le r^2 n^{\dg(G)+2}%
    .\]
  Furthermore,
  \[\invnu(G)\le \frac{1+\log\log\lvert G\rvert}{\log
      {\indmin(G)}}+\max\left(\dg(G), \frac{\log\log\lvert G\rvert}{\log
        {\indmin(G)}}\right)+2.02,\]
  where $\indmin(G)$ denotes the smallest index of a proper subgroup of~$G$.
\end{theorem}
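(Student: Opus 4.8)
The plan is to prove the two displayed inequalities in turn, obtaining the bound on $\invnu(G)$ from the bound on $\maxi_n(G)$ by a union-bound argument. For the first inequality I would fix a chief series $1=G_0<G_1<\dots<G_r=G$ and attach to each maximal subgroup $M$ the factor $G_i/G_{i-1}$ it supplements, where $i$ is least with $G_i\not\le M$. Then $G_{i-1}\le M_G$ and $G_i\cap M_G=G_{i-1}$, so the image $G_iM_G/M_G$ is a minimal normal subgroup of the primitive group $G/M_G$, which pins down the Baer type of $M$: the factor is abelian exactly for type~1 and non-abelian for types~2 and~3. A type~2 subgroup is thereby associated with a single non-abelian factor, whereas for type~3 the socle of $G/M_G$ is a product of two minimal normal subgroups both $G$-isomorphic to $G_i/G_{i-1}$, so such an $M$ is associated with an unordered pair of isomorphic non-abelian factors. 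Counting single factors and pairs gives $r_b+\binom{r_b}{2}=\tfrac12(r_b+1)r_b$ slots for types~2 and~3 and $r_a$ slots for type~1.

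It remains to bound, per slot, the number of maximal subgroups of index $n$. For type~1 the index equals $\lvert G_i/G_{i-1}\rvert=n$ and each such $M/G_{i-1}$ is a complement to the abelian chief factor $V=G_i/G_{i-1}$ in $G/G_{i-1}$; once one complement is fixed the complements are in bijection with the derivations $G/G_i\to V$, of which there are at most $\lvert V\rvert^{\dg(G)}=n^{\dg(G)}$, since a derivation is determined by its values on a generating set of size at most $\dg(G)$. For types~2 and~3 the relevant quantity is the number of core-free maximal subgroups of fixed index $n$ in the primitive quotient $G/M_G$, which the structure theory of primitive groups bounds by $n^2$. Multiplying slot counts by per-slot bounds, and absorbing the type-1 estimate into the common factor $n^2$, yields $\maxi_n(G)\le\bigl(\tfrac12(r_b+1)r_b+r_an^{\dg(G)}\bigr)n^2$. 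The final inequality is then routine, as $\tfrac12(r_b+1)r_b\le r_b^2$ and $r_b^2+r_a\le(r_a+r_b)^2=r^2$, whence the bracket is at most $r^2n^{\dg(G)}$. I expect the main obstacle to be the type-2,3 count: the clean quadratic bound and the pairing of non-abelian factors ultimately rest on the classification of finite simple groups, through the O'Nan--Scott/Aschbacher--Scott description of primitive groups and of the maximal subgroups of almost simple groups.

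For the bound on $\invnu(G)$ I would argue probabilistically. The failure of $k$ uniformly random elements to generate $G$ is the union over maximal subgroups $M$ of the event that all $k$ lie in $M$, so the failure probability is at most $\sum_M\lvert G:M\rvert^{-k}=\sum_n\maxi_n(G)\,n^{-k}$. Substituting the first inequality and separating the two terms bounds this by $\tfrac12(r_b+1)r_b\sum_{n\ge\ell}n^{2-k}+r_a\sum_{n\ge\ell}n^{\dg(G)+2-k}$, where $\ell=\indmin(G)$ is the least index of a proper subgroup, so every maximal subgroup has index at least $\ell$. For $k$ large each tail is a convergent series dominated, up to an absolute constant, by its leading term $\ell^{2-k}$ respectively $\ell^{\dg(G)+2-k}$. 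Since a chief series has at most $\log\lvert G\rvert$ factors, both $r_a$ and $r_b$ are at most $\log\lvert G\rvert$, so the two terms are controlled once $k$ exceeds two thresholds.

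Finally I would pick $k$ so that the total is at most $1/\mathrm{e}$, which forces the generation probability to be at least $1-1/\mathrm{e}\ge1/\mathrm{e}$ and hence bounds $\invnu(G)$ by $k$. Making the type-1 term small requires about $k\gtrsim\dg(G)+2+\tfrac{\log\log\lvert G\rvert}{\log\ell}$, while the type-2,3 term requires about $k\gtrsim 2+\tfrac{2\log\log\lvert G\rvert}{\log\ell}$; taking the larger produces the $\max$ in the statement together with the shared summand $\tfrac{1+\log\log\lvert G\rvert}{\log\ell}$, the leftover $\tfrac1{\log\ell}$ and the constant $2.02$ absorbing the absolute constants from the tail sums and from the factor $1/\mathrm{e}$. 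Carrying out this optimisation carefully over the ranges of $\dg(G)$ and $\ell$ gives exactly the stated bound.
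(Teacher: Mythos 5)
First, a point of comparison: this paper does not prove Theorem~\ref{th-Lubotzky-mn} at all; it quotes it from Lubotzky \cite{Lubotzky02} as background, so your proposal can only be measured against Lubotzky's original argument. Your reconstruction does follow that argument's architecture: assign every maximal subgroup to a ``slot'' determined by a fixed chief series, bound the number of subgroups per slot (complements/derivations for type~1, the core-free bound of Theorem~\ref{th-b-2} for types~2 and~3), and then deduce the $\invnu(G)$ bound by a union bound with tail estimates. The type~1 analysis is sound (including the verification that $M\cap G_i=G_{i-1}$, so that such an $M$ really is a complement of the abelian factor), and so is the reduction of types~2 and~3 to the $n^2$ bound.

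There is, however, a genuine error in your type~3 step. You assert that the socle of $G/M_G$ is a product of two minimal normal subgroups ``both $G$-isomorphic to $G_i/G_{i-1}$''. This is never the case: $G$-isomorphic chief factors have equal centralisers, whereas in a primitive group of type~3 the two minimal normal subgroups $A$ and $B$ satisfy $\Cent_{G/M_G}(A)=B\ne A=\Cent_{G/M_G}(B)$; thus $A$ and $B$ are isomorphic as abstract groups but never $G$-isomorphic. The claim is also inconsistent with your own count: if both minimal normal subgroups were $G$-isomorphic to the same series factor, a type~3 subgroup would occupy a single slot indexed by $i$, and types~2 and~3 together would contribute $2r_b$ slots, not $r_b+\binom{r_b}{2}=\frac12(r_b+1)r_b$. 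The correct statement --- the one this paper uses in its refinement, Theorem~\ref{th-upper-max-3} --- is that $A$ and $B$ are $G$-isomorphic to two \emph{distinct} non-abelian factors of the series (abstractly isomorphic and $G$-connected, lying in the same crown), and that $M_G=\Cent_G(A)\cap\Cent_G(B)$ is then determined by the unordered pair $\{i,j\}$ of their positions; only with this identification does multiplying the $\binom{r_b}{2}$ pair count by the per-core bound $n^2$ become legitimate (similarly, for type~2 you should note $M_G=\Cent_G(G_i/G_{i-1})$, so that a slot determines a core). Finally, your treatment of the second displayed inequality is a plan rather than a proof: the shape $2+\frac{\log\log\lvert G\rvert}{\log\indmin(G)}+\max\bigl(\dg(G),\frac{\log\log\lvert G\rvert}{\log\indmin(G)}\bigr)$ is identified correctly, but the constant $2.02$ and the extra $1/\log\indmin(G)$ arise precisely from estimating $\sum_{n\ge\ell}n^{-s}$ when the exponent $s$ is close to $1$ and $\ell=2$, which is exactly the bookkeeping you defer.
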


The bound for $\invnu(G)$ in Theorem~\ref{th-Lubotzky-mn} depends on the following invariant.
\begin{definition}
  For a group $G$, let us call 
  \[\mathcal{M}(G)=\max_{n\ge 2}\log_n\maxi_n(G)=\max_{n\ge 2} \frac{\log \maxi_n(G)}{\log
    n}.\]
\end{definition}
Here we use $\log_nx$ to denote the logarithm to the base $n$ of
$x$, that is, $\log_nx=\log x/\log n=\ln x/\ln n$.
There is a close relation between $\mathcal{M}(G)$ and $\invnu(G)$.
\begin{theorem}[Lubotzky, {\cite[Proposition~1.2]{Lubotzky02}}]\label{th-Lubotzky-M}
  \[\mathcal{M}(G)-3.5\le \invnu(G)\le \mathcal{M}(G)+2.02.\]
\end{theorem}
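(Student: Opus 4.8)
The plan is to translate both inequalities into statements about the probability $P(G,k)$ that $k$ elements of $G$, chosen independently and uniformly, generate $G$; recall that $\invnu(G)$ is the least positive integer $k$ with $P(G,k)\ge 1/\mathrm{e}$. Write $Q(G,k)=1-P(G,k)$ for the non-generation probability. The elementary observation driving everything is that a $k$-tuple fails to generate $G$ exactly when it lies inside some maximal subgroup, so that by the union bound
\[
  Q(G,k)\le\sum_{M\text{ maximal}}\lvert G:M\rvert^{-k}=\sum_{n\ge2}\maxi_n(G)\,n^{-k}.
\]

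For the upper bound $\invnu(G)\le\mathcal{M}(G)+2.02$ I would feed in the definition of $\mathcal{M}(G)$, which gives $\maxi_n(G)\le n^{\mathcal{M}(G)}$ for every $n\ge2$, to obtain
\[
  Q(G,k)\le\sum_{n\ge2}n^{\mathcal{M}(G)-k}=\zeta\bigl(k-\mathcal{M}(G)\bigr)-1 ,
\]
where $\zeta$ is the Riemann zeta function. The constant $2.02$ is then calibrated so that $\zeta(2.02)-1\le 1-1/\mathrm{e}$ (the threshold sits just above $2$, since $\zeta(2)-1=\pi^2/6-1\approx0.645>1-1/\mathrm{e}\approx0.632$). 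Hence whenever $k-\mathcal{M}(G)\ge2.02$ one has $Q(G,k)\le1-1/\mathrm{e}$, that is $P(G,k)\ge1/\mathrm{e}$, so such an integer $k$ witnesses the bound. A short rounding argument is needed to pass from this real threshold to the least admissible integer, and the precise value $2.02$ is chosen to make the clean integer statement come out.

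The lower bound $\mathcal{M}(G)-3.5\le\invnu(G)$ is the harder half and needs a matching lower bound on $Q(G,k)$. I would fix an index $n\ge2$ attaining the maximum, so that $m:=\maxi_n(G)=n^{\mathcal{M}(G)}$, list the maximal subgroups of that index as $M_1,\dots,M_m$, and study the random variable $X$ counting how many of them contain the chosen $k$-tuple. Its mean is $\mathbb{E}[X]=m\,n^{-k}=n^{\mathcal{M}(G)-k}$, which is large precisely in the regime $k\ll\mathcal{M}(G)$ that matters, and $Q(G,k)\ge\Pr[X\ge1]$. Since a large mean alone does not force $\Pr[X\ge1]$ to be large, I would invoke the second-moment (Paley–Zygmund) inequality $\Pr[X\ge1]\ge\mathbb{E}[X]^2/\mathbb{E}[X^2]$, where $\mathbb{E}[X^2]=\sum_{i,j}\lvert G:M_i\cap M_j\rvert^{-k}$.

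The main obstacle is exactly this second moment: two distinct maximal subgroups of index $n$ satisfy only $\lvert G:M_i\cap M_j\rvert\ge2n$ in general (as $M_i\cap M_j$ is a proper subgroup of $M_i$), and such close pairs can inflate $\mathbb{E}[X^2]$ enough to swamp the main term. I would overcome this by not using all $m$ subgroups at once but restricting to a well-chosen sub-family attached to a single chief factor, where the intersections $M_i\cap M_j$ are forced to have index of order $n^2$ (the model case being hyperplanes of an elementary abelian group, which meet in codimension two); passing beforehand to the quotient $G/N$ by the intersection $N$ of the index-$n$ maximal subgroups leaves $\maxi_n$ and hence the relevant value of $\mathcal{M}$ unchanged while only decreasing $\invnu$, and makes this family tractable. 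For such a family $\mathbb{E}[X^2]\le\mathbb{E}[X]+\mathbb{E}[X]^2$, whence $\Pr[X\ge1]\ge\mathbb{E}[X]/(1+\mathbb{E}[X])$, so that requiring this to exceed $1-1/\mathrm{e}$ forces only $\mathbb{E}[X]=n^{\mathcal{M}(G)-k}$ to be bounded below by a constant. The cumulative loss from the structural reduction and from the worst-case intersection estimates is what degrades the ideal gap of about $1$ to the stated constant $3.5$; tracking these losses carefully yields $Q(G,k)>1-1/\mathrm{e}$ for every integer $k\le\mathcal{M}(G)-3.5$, and therefore $\invnu(G)\ge\mathcal{M}(G)-3.5$.
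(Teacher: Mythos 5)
The paper offers no proof of this statement: it is quoted directly from Lubotzky \cite{Lubotzky02} (Proposition~1.2 there), and the only commentary given is that the proof of the left-hand inequality ``depends on'' Theorem~\ref{th-b-2}, i.e.\ on the fact that a group has at most $n^2$ core-free maximal subgroups of index~$n$. Your proof of the right-hand inequality $\invnu(G)\le \mathcal{M}(G)+2.02$ is essentially correct and is exactly the standard (Lubotzky) argument: union bound, $\maxi_n(G)\le n^{\mathcal{M}(G)}$, and calibration of $\zeta(s)-1$ against $1-1/\mathrm{e}$ (your observation that the threshold sits just above $2$ is right; the rounding from the real threshold to the least admissible integer is glossed over, but that imprecision is inherited from the way the statement is phrased in the literature).

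The left-hand inequality is where your proposal has a genuine gap, and the missing ingredient is precisely the one the paper singles out. Your Paley--Zygmund plan needs a sub-family of the $n^{\mathcal{M}(G)}$ maximal subgroups of index $n$ that is still large (of size $n^{\mathcal{M}(G)-\OLandau(1)}$) and whose members pairwise intersect in index close to $n^2$. Neither property is established. First, nothing guarantees that the subgroups ``attached to a single chief factor'' account for more than a vanishing fraction of $\maxi_n(G)$; to control how the $n^{\mathcal{M}(G)}$ subgroups distribute over cores one needs exactly Theorem~\ref{th-b-2} (at most $n^2$ maximal subgroups per core, hence at least $n^{\mathcal{M}(G)-2}$ distinct cores), a nontrivial theorem about primitive groups that your sketch never invokes. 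Second, the intersection estimate you want is simply false in general, even for a family with a common core: in $G=C_p\rtimes C_q$ with $q\mid p-1$ and $q$ small, all $p$ maximal subgroups of index $n=p$ are core-free (one crown, one chief factor), yet two distinct ones intersect trivially, in index $pq\ll n^2$; passing to the quotient by the intersection of the family changes nothing here, so the ``hyperplane'' model is not representative. Lubotzky's actual route replaces the second-moment computation by counting distinct cores via Theorem~\ref{th-b-2} and then bounding the generation probability of a group that has many distinct primitive quotients of degree $n$; without some such input, your final step (``tracking these losses carefully yields \dots $3.5$'') is an unsupported assertion -- no value of the constant is ever derived.
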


The proof of the left hand side bound in the above result depends on the following
result.%

\begin{theorem}[see
  {\cite[Theorem~1.3]{Lubotzky02} or \cite[Theorem~21]{DetomiLucchini06-noncommalggeom}}]\label{th-b-2} %
  There exists a constant $b$ such that for every group $G$ and
  every $n\ge 2$, $G$ has at most $n^b$ core-free maximal subgroups of
  index~$n$. In fact, $b=2$ will do.
\end{theorem}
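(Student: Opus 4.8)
The plan is to use that a core-free maximal subgroup $M$ of index $n$ is precisely a point stabiliser of a faithful primitive action of $G$ on $n$ points, so that $G$ is itself a primitive group. First I would record two elementary reductions. Since $M$ is maximal and core-free it is self-normalising --- if $\Norm_G(M)=G$ then $M=M_G=1$ and $G$ has prime order, an immediate case --- so each conjugacy class of core-free maximal subgroups of index $n$ has size exactly $n=\lvert G:M\rvert$; hence it would suffice to bound the number of such classes by $n$. Secondly, for any minimal normal subgroup $N$ of $G$ we have $N\not\le M$ (otherwise $N\le M_G=1$), whence $G=NM$ and $\lvert N:N\cap M\rvert=n$, so $N$ is transitive on the $n$ cosets. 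Because every core-free maximal subgroup induces $G/M_G=G$ as its primitive quotient, the group $G$ has a single Baer type, and I would split the argument according to whether $\Soc(G)$ is abelian or not.

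In the abelian (type~1) case $N=\Soc(G)$ is a self-centralising elementary abelian $p$-group, and $N\cap M\trianglelefteq NM=G$ together with minimality of $N$ forces $N\cap M=1$; thus $M$ is a complement to $N$, every core-free maximal subgroup has the same index $n=\lvert N\rvert$, and conversely each complement is core-free maximal. Fixing one complement $H\cong G/N$, which acts faithfully and irreducibly on $N$, the complements are parametrised by the cocycles $\mathrm{Z}^{1}(H,N)$ and their conjugacy classes by $\cohom^{1}(H,N)$, with $\lvert\mathrm{B}^{1}(H,N)\rvert=\lvert N\rvert=n$ since $\Cent_N(H)=1$ (the degenerate case $H=1$, that is $G$ of prime order, being immediate). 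The count of core-free maximal subgroups of index $n$ is therefore $\lvert\mathrm{Z}^{1}(H,N)\rvert=n\cdot\lvert\cohom^{1}(H,N)\rvert$, and the bound $n^{2}$ reduces to the cohomological inequality $\dim_{\F_p}\cohom^{1}(H,N)\le\dim_{\F_p}N$ for a faithful irreducible module, which I would quote from the theory of the first cohomology group developed by Aschbacher and Guralnick.

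In the non-abelian cases (types~2 and~3) I would invoke the O'Nan--Scott description of the socle $N=T^{k}$ (respectively $N_1\times N_2$ in type~3) to control the socle point stabiliser $N\cap M$ and the way $M$ lies over $N$. Here the target is the weaker statement that the number of conjugacy classes of core-free maximal subgroups of a fixed index $n$ is at most $n$, after which the class-size count of the first paragraph yields the bound $n^{2}$; in fact one expects far fewer than $n$ classes. The subcases to treat are the almost simple type, where classes correspond to $\Aut(T)$-classes of maximal subgroups of $T$ of the relevant index and are few, and the diagonal, product and twisted-wreath types, where one counts the admissible diagonal subgroups and the factorisations of $N$ refining $N\cap M$. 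Assembling these estimates, each class count stays comfortably below $n$, so the total is at most $n^{2}$ and $b=2$ suffices.

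I expect the crux to be twofold. The sharp input is the affine case, which is genuinely extremal --- it can produce on the order of $n^{2}$ complements --- and rests on the non-elementary bound $\dim\cohom^{1}(H,N)\le\dim N$ for faithful irreducible modules rather than on any naive counting of cocycles by generators of $H$, which would only give $\lvert N\rvert^{\dg(H)}$. The most laborious part is making the non-abelian analysis uniform across all O'Nan--Scott classes; the main obstacle there is to bound the number of diagonal and product-type stabilisers of index $n$, keeping every contribution within the single factor of $n$ that the self-normalising reduction still leaves available.
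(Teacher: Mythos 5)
The paper does not actually prove this statement: it is quoted with attribution to Lubotzky \cite[Theorem~1.3]{Lubotzky02} and Detomi--Lucchini \cite[Theorem~21]{DetomiLucchini06-noncommalggeom}, so the only fair comparison is with those proofs. Your overall architecture does match theirs: the reduction ``every core-free maximal subgroup is self-normalising, hence its class has exactly $n$ conjugates, so it suffices to bound the number of classes by $n$'', the split according to the Baer type of the (abstract, hence unique) primitive structure of $G$, and the treatment of the affine case are all correct and are exactly how the extremal case is handled in the literature. In particular, counting the complements of the abelian socle $N$ as $\lvert\mathrm{Z}^1(H,N)\rvert=n\lvert\cohom^1(H,N)\rvert$ and invoking the Aschbacher--Guralnick bound $\lvert\cohom^1(H,N)\rvert<\lvert N\rvert$ for faithful irreducible modules is the right (and essentially the only known) argument; the present paper reuses this same complement count inside the proof of Theorem~\ref{th-nd-abelian-crown}.

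The genuine gap is the non-abelian half, which in your proposal is a plan rather than a proof. The assertion that in types~2 and~3 ``each class count stays comfortably below $n$'' is precisely the content of the theorem in those cases, and you give no mechanism for it. Two concrete failure points: first, for almost simple $G$ the claim that conjugacy classes of maximal subgroups of index $n$ ``are few'' is not elementary --- bounding maximal subgroups of given index in almost simple groups is essentially the same theorem restricted to that class, and in the literature it rests on classification-dependent results (of Aschbacher--Guralnick, Guralnick--Kantor--Saxl, Liebeck--Shalev type); a proof must quote such a result explicitly, since no self-contained counting is known. Second, when the non-abelian socle $N$ is complemented by $M$ (the twisted wreath configuration), complements are parametrised by non-abelian $1$-cocycles, and the only naive estimate available is $\lvert N\rvert^{\dg(G/N)}$ --- exactly the kind of generator-counting bound you rightly reject as insufficient in the abelian case --- and here there is no analogue of the $\cohom^1$ theorem to replace it, so your framework offers nothing to keep this subcase within a single factor of $n$. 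Until these two points are supplied with actual arguments (or citations), the proposal proves the theorem only for groups possessing an abelian minimal normal subgroup, i.e., the type~1 case.
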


Unless otherwise stated, we will follow the notation of the books
\cite{DoerkHawkes92} and \cite{BallesterEzquerro06}. Detailed
information about primitive groups and  chief factors, crowns, and
precrowns of a group can be found in \cite[Chapter~1]{BallesterEzquerro06}.

Detomi and Lucchini
\cite{DetomiLucchini03}, with the help of the number  $\lambda(G)$ of non-Frattini
chief factors in a given chief series of~$G$ and by considering their
associate crowns,  proved independently also the
validity of Pak's
conjecture.

\begin{theorem}[Detomi and Lucchini {\cite[Theorem~20]{DetomiLucchini03}}]\label{th-DetomiLucchini03-20}
  There exists a constant $c$ such that, for any group $G$,
  $\invnu(G)\le \lfloor\dg(G)+c\log\lambda(G)\rfloor$ if $\lambda(G)>1$,
  otherwise, $\invnu(G)\le \lfloor \dg(G)+c\rfloor$, where $\lambda(G)$
  denotes the number of non-Frattini chief factors in a given chief
  series of~$G$.
\end{theorem}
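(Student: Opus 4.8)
The plan is to recast the statement probabilistically and then use the crown structure of $G$ so that the non-Frattini chief factor count $\lambda=\lambda(G)$, rather than the total number of chief factors, governs the error. Write $d=\dg(G)$ and let $P(G,k)$ denote the probability that $k$ elements chosen uniformly at random generate $G$. By the definition of $\invnu$, the inequality $\invnu(G)\le k$ is equivalent to $P(G,k)\ge 1/\mathrm{e}$, i.e.\ to $-\ln P(G,k)\le 1$. Hence it suffices to produce an absolute constant $c$ for which the choice $k=\lfloor d+c\log\lambda\rfloor$ (resp.\ $k=\lfloor d+c\rfloor$ when $\lambda\le 1$) already forces $-\ln P(G,k)\le 1$.

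First I would fix a chief series $1=N_0<N_1<\dots<N_r=G$ and factor $P(G,k)=\prod_{i=1}^{r}P(\text{gen }G/N_{i-1}\mid\text{gen }G/N_i)$ by the chain rule for conditional probabilities. By the Frattini argument every factor arising from a Frattini chief factor equals $1$, so only the $\lambda$ non-Frattini factors contribute. The essential reduction is then to group these factors into their crowns: for each crown $C$ a probabilistic Gaschütz-type argument collapses the conditional factors of the $G$-isomorphic chief factors it contains into a single conditional probability $P_C(k)$ of lifting a generating tuple across the crown, yielding $P(G,k)\ge\prod_{C}P_C(k)$ with at most $\lambda$ crowns in the product.

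Second I would establish, for each crown $C$, a uniform lower bound of the shape $P_C(k)\ge 1-\gamma\,q_C^{-(k-d)}$, where $q_C\ge 2$ measures the relevant chief factor (the order of the endomorphism field in the abelian case, and the minimal degree of the associated almost-simple or product-type primitive action in the non-abelian case) and $\gamma$ is absolute. The computation that motivates the exponent is the elementary abelian case $G=\F_q^{\delta}$, where $P(G,k)=\prod_{i=0}^{\delta-1}(1-q^{-(k-i)})$, so that $1-P(G,k)\le(q-1)^{-1}q^{-(k-\delta)}$; here the multiplicity $\delta$ of the crown is exactly absorbed, because the local generator number $\dg(G/R_C)\le\dg(G)=d$ for the quotient $G/R_C$ defining the crown. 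The general abelian case carries an additional $1$-cohomology contribution and the non-abelian case requires the structure of primitive groups together with the bound $b=2$ of Theorem~\ref{th-b-2} to control the number of new generation conditions, but in each case the bound keeps the form $1-P_C(k)\le\gamma\,q_C^{-(k-d)}$.

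Finally I would sum. Since $k-d\ge c\log\lambda-1$ and $q_C\ge 2$, one gets $q_C^{-(k-d)}\le q_C^{\,1-c\log\lambda}\le 2\lambda^{-c}$ for every crown, whence $-\ln P(G,k)\le\sum_C\frac{1-P_C(k)}{P_C(k)}\le\gamma'\lambda^{1-c}$ for an absolute constant $\gamma'$, which is at most $1$ as soon as $c-1\ge\log\gamma'$, i.e.\ for a suitable absolute $c$; the cases $\lambda\le 1$ are handled by the same estimate with constant slack. The main obstacle is the per-crown estimate of the third paragraph: making the lower bound on $P_C(k)$ genuinely uniform, in particular taming the cohomological term for non-split abelian crowns and, above all, handling the non-abelian crowns, where one must combine the description of the maximal subgroups inducing the relevant primitive quotients with the counting bound of Theorem~\ref{th-b-2} so that the multiplicity of the crown is again absorbed by $d$ and only an absolute constant $\gamma$ survives.
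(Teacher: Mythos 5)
The non-abelian half of your per-crown estimate --- the step you yourself flag as the main obstacle --- is not merely difficult: it is false as stated, and this is a fatal gap. There is no absolute constant $\gamma$ with $1-P_C(k)\le\gamma\,q_C^{-(k-d)}$ for non-abelian crowns. Take $S=\Alt(m)$ and $G=S^t$ with $t=\phi_2(S)/\lvert\Aut S\rvert$, where $\phi_2(S)$ is the number of generating pairs of $S$; by a classical result of P.~Hall this is the largest $t$ with $\dg(S^t)=2$, and $t\sim\lvert S\rvert/2$ by Dixon's theorem. The $t$ coordinate factors are pairwise $G$-connected through diagonal type-3 quotients isomorphic to $S\times S$, so they form a \emph{single} crown $C$ of length $t_C=t=\lambda(G)$, whose associated primitive group $S$ has minimal degree $q_C=m$. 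The coordinate projections of a uniform $k$-tuple are independent and each must generate $S$, so $P_C(k)=P(G,k)\le P(S,k)^t\le\bigl(1-\tfrac{1}{2}m^{1-k}\bigr)^t$; hence $P_C(k)\ge 1/\mathrm{e}$ forces $m^{k-1}\ge t/2$, i.e.\ $k\gtrsim\log_m t\sim m$. Your claimed estimate would give $1-P_C(k)\le\gamma\,m^{-(k-2)}$, and since this group has only one crown your final summation would then bound $\invnu(G)$ by a quantity independent of $m$ --- contradicting $\invnu(G)\to\infty$ while $\dg(G)=2$. This family is exactly the reason the theorem must contain a $\log\lambda(G)$ term, so no argument can make the per-crown error independent of the crown length.

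The underlying point is that ``the multiplicity of the crown is absorbed by $d$'' is a purely abelian phenomenon: there, the Dalla Volta--Lucchini inequality $q^{\delta}\lvert\cohom^1(L/A,A)\rvert\le n^{d-1}$ forces abelian crowns of $d$-generated groups to be logarithmically short, which is why all maximal subgroups attached to one abelian crown with factors of order $n$ number at most $n^{d}-1$ (Theorem~\ref{th-nd-abelian-crown}) and your estimate $1-P_C(k)\lesssim n^{d-k}$ is correct in that case. A non-abelian crown can instead have length of order $\lvert S\rvert^{d-1}$, and each of its $t_C$ factors supplies its own type-2 maximal subgroups of index as small as $q_C\ll\lvert S\rvert$, plus up to $t_C(t_C-1)/2$ cores of type-3 subgroups; the honest per-crown bound (via Theorem~\ref{th-b-2} applied to each core) has the shape $1-P_C(k)\lesssim t_C\,q_C^{2-k}+t_C^{2}\,n_C^{2-k}$, with $n_C$ the common order of the chief factors, and the factors $t_C\le\lambda(G)$ and $t_C^{2}\le\lambda(G)^{2}$ must remain. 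They are then beaten not by the count of crowns but by $k-d\ge c\log\lambda(G)$ together with $q_C\ge 5$ and $n_C\ge 60$ (for instance $\lambda^{2}\cdot 5^{-c\log\lambda}=\lambda^{2-c\log 5}\le\lambda^{-0.3}$ for $c\ge 1$), so the arithmetic of your last paragraph has to be redone along these lines; with that repair your route does close, and it is essentially Detomi and Lucchini's original argument. The paper's own proof is entirely different: it specialises Theorem~\ref{th-A-detailed} (crown-based counts of $\maxi_n(G)$ combined with Lubotzky's bound $\invnu(G)\le\mathcal{M}(G)+2.02$), noting that $\abcr_n(G)$, $\rks_n(G)$ and $\rko_n(G)$ are all at most $\lambda(G)$, and thereby obtains the explicit constant $c=5.02$, which your existence-only argument does not provide.
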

Here the symbol 
$\lfloor x\rfloor$ denotes the defect integer part of $x$, that
is, the largest integer number $n$ such
that $n\le x$. %

The proof of this result uses some results of Dalla Volta and Lucchini
\cite{DallaVoltaLucchini98} and Dalla Volta, Lucchini, and Morini
\cite{DallaVoltaLucchiniMorini01} about the number of generators of powers
of certain diagonal-type subgroups of direct powers of copies of a
monolithic group.

The aim of this paper is to obtain
tighter bounds for $\maxi_n(G)$, and so for $\invnu(G)$, by considering the numbers of maximal
subgroups of each type, as in Lubotzky's paper \cite{Lubotzky02}, and with the help of the crowns associated to abelian chief factors, like in Detomi and Lucchini's paper \cite{DetomiLucchini03}, and to non-abelian chief factors. Our bounds will depend on four invariants. Our first invariant is related to maximal
subgroups of type~1.

\begin{definition}
  Let $G$ be a group and let $n>1$ be a natural number. We denote by
  $\abcr_n(G)$ the number of crowns
  associated to complemented abelian chief factors of order~$n$
  of~$G$, that is, the number of $G$-isomorphism classes of
  complemented abelian chief factors of~$G$.
\end{definition}
Clearly, $\abcr_n(G)=0$ unless $n$ is a power of a prime.

In order to define our second invariant, which concerns
non-abelian chief factors and is related to the primitive quotients of type~2, we must recall the next definition.
\begin{definition}[see {\cite[Definition~1.2.9]{BallesterEzquerro06}}]\label{def-prim-gp-chieff}
  Given a chief factor $H/K$ of a group $G$, the \emph{primitive group
  $[H/K]*G$ associated with $H/K$ in $G$} coincides with the
semidirect product $[H/K]\bigl(G/{\Cent_G(H/K)}\bigr)$ if $H/K$ is
abelian and with the quotient group $G/{\Cent_G(H/K)}$ if $H/K$ is
non-abelian. 
\end{definition}

\begin{definition}
  Let $n$ be a natural number. The symbol ${\rks_n(G)}$ denotes the
  number of non-abelian chief factors $A$ in a given chief series of $G$ such that the
  associated primitive group $[A]*G$ has a core-free maximal subgroup of
  index~$n$.
\end{definition}

Our third and fourth invariants concern also non-abelian chief factors and
contain information about the primitive quotients of type~3 of the group. 
\begin{definition}
  Let $n$ be a natural number. The symbol ${\rko_n(G)}$ denotes the
  number of non-abelian chief factors $A$ in a given chief series of $G$ such that $A$
  has order~$n$.
\end{definition}

For the definition of our fourth invariant, the following auxiliary invariant is useful.
\begin{definition}
  Let $G$ be a group. For a characteristically simple
  group $A$, that is, a direct product of copies of a
  simple group $S$,
  $\rk_A(G)$ denotes the number of non-Frattini chief factors isomorphic to~$A$ in a given chief series of~$G$.
\end{definition}

According to Theorem~A of
\cite{BallesterEstebanJimenez21-largecharsim}, $\rk_A(G)$ coincides with the 
largest number $r$ such that $G$ has a normal
section that is the direct product of $r$ non-Frattini chief factors of $G$
that are isomorphic (not necessarily $G$-isomorphic) to~$A$.

\begin{definition}
  Let $n$ be a natural number. The symbol ${\rkm_n(G)}$ denotes the maximum of the lenghts of the $G$-crowns associated to non-abelian chief factors of order~$n$ of~$G$.
\end{definition}

We will see that, according to Theorem~\ref{th-power-order}, the
non-abelian chief factors $A$ of $G$ of order~$n$ fall into at most
two isomorphism classes. We also note that a non-abelian chief factor can be counted in $\rks_n(G)$ for different values of $n$, if the corresponding primitive group has core-free maximal subgroups of different indices. Furthermore, a non-abelian chief factor of order~$n$ that is counted in $\rko_n(G)$ can also be counted in $\rks_n(G)$ if the primitive group associated to the corresponding chief factor has a maximal subgroup complementing its socle (see \cite{JimenezLafuente98} or \cite[Theorem~1.1.48]{BallesterEzquerro06}),

We obtain a bound for $\maxi_n(G)$ that improves the one of Theorem~\ref{th-Lubotzky-mn}. 
\begin{theorema}\label{th-Lub-A}
  Let $G$ be a non-cyclic group with $r$ chief factors in a given chief series. For every natural $n\ge 2$, $\maxi_n(G)\le rn^{\dg(G)+2}$.
\end{theorema}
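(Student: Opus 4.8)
The plan is to split the maximal subgroups of index~$n$ according to the Baer type of the primitive quotient $G/M_G$ they induce, writing $\maxi_n(G)=a_n+b_n+c_n$, where $a_n$, $b_n$, $c_n$ count the maximal subgroups of index~$n$ of types~1, 2 and~3 respectively. I would bound each summand separately, in each case routing the count through the crowns attached to the relevant chief factors, and finally recombine the three estimates using that a non-cyclic group satisfies $\dg(G)\ge 2$, which is exactly what creates enough room to absorb the lower-order $n^2$ contributions into a single $n^{\dg(G)+2}$ term.

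For type~2 the argument is essentially self-contained. If $M$ is a maximal subgroup with $G/M_G$ primitive of type~2, then its socle $N/M_G$ is a non-abelian chief factor $A$ with $\Cent_G(A)=M_G$, so that $G/M_G=[A]*G$ and $M/M_G$ is a core-free maximal subgroup of $[A]*G$ of index~$n$. By Jordan--H\"older, $A$ is $G$-isomorphic to a chief factor of the fixed chief series, and $G$-isomorphic factors yield the same primitive group $[A]*G$. Hence the assignment $M\mapsto M/M_G$ embeds the set of type~2 maximal subgroups of index~$n$ into the disjoint union, over the distinct primitive groups $[A]*G$ arising this way, of their core-free maximal subgroups of index~$n$. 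The number of such primitive groups is at most $\rks_n(G)$, and each contributes at most $n^2$ by Theorem~\ref{th-b-2}; therefore $b_n\le \rks_n(G)\,n^2\le r_b n^2$.

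For type~1 I would work crown by crown. Each type~1 maximal subgroup of index~$n$ complements a chief factor $G$-isomorphic to a complemented abelian chief factor $V$ of order~$n$, of which there are $\abcr_n(G)\le r_a$ classes. Fixing such a $V$, the maximal subgroups with $\Soc(G/M_G)\cong_G V$ correspond to pairs consisting of a $G$-submodule of the crown $C/R\cong V^{\delta}$ with quotient isomorphic to $V$, together with a complement to the resulting chief factor, the latter being counted by the cocycle group $Z^1$ and hence controlled through $\lvert\cohom^1(G,V)\rvert$. The subtle point is that the naive product of these two counts is exponential in the crown length $\delta$; the resolution is the sharp bound on $\delta$ available for a $\dg(G)$-generated group, which controls the number of complemented copies of $V$ in terms of $\dg(G)$, $\dim_{\End_G V}V$ and $\dim_{\End_G V}\cohom^1(G,V)$ and collapses the per-class contribution to at most $n^{\dg(G)+2}$. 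Summing over the $\abcr_n(G)$ classes gives $a_n\le \abcr_n(G)\,n^{\dg(G)+2}\le r_a n^{\dg(G)+2}$.

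The main obstacle is type~3, and it is precisely here that the improvement from quadratic to linear in $r_b$ must be won. A type~3 maximal subgroup of index~$n$ induces a primitive group whose socle is a direct product of two $G$-isomorphic non-abelian chief factors of order~$n$, so the relevant data are governed by the crowns of non-abelian chief factors of order~$n$, and in particular by $\rko_n(G)$ and by the maximal crown length $\rkm_n(G)$. Lubotzky's estimate loses a factor of order $r_b$ because it charges such a maximal subgroup to an unordered pair of isomorphic factors, producing a $\binom{\rkm_n(G)}{2}$-type count inside a single crown; the task is to show that, per crown, the number of type~3 maximal subgroups of index~$n$ is in fact linear in the crown length. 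I would obtain this by combining the description of the core-free maximal subgroups of a type~3 primitive group as diagonal subgroups determined by the diagonal in the socle, the bound of Theorem~\ref{th-b-2} applied to each such primitive quotient, the results of Dalla Volta and Lucchini \cite{DallaVoltaLucchini98} and Dalla Volta, Lucchini and Morini \cite{DallaVoltaLucchiniMorini01} on the generation of diagonal-type subgroups of direct powers of a monolithic group, and the identification of $\rk_A(G)$ with the length of the corresponding normal section from \cite{BallesterEstebanJimenez21-largecharsim}. This yields $c_n\le C' r_b n^2$ for an absolute constant $C'$. Adding the three bounds gives $\maxi_n(G)\le r_a n^{\dg(G)+2}+C r_b n^2$ for an absolute constant $C$; since $G$ is non-cyclic we have $\dg(G)\ge 2$, so $Cn^2\le n^{\dg(G)+2}$ for all $n\ge 2$, and the estimate collapses to $r_a n^{\dg(G)+2}+r_b n^{\dg(G)+2}=r\,n^{\dg(G)+2}$, as required.
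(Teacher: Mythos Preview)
Your treatment of types~1 and~2 is essentially the paper's (the paper actually gets the sharper per-crown bound $n^{\dg(G)}-1$ for type~1 via Theorem~\ref{th-nd-abelian-crown}, but your weaker $n^{\dg(G)+2}$ would still suffice). The gap is in type~3. You assert that the per-crown contribution is \emph{linear} in the crown length, yielding $c_n\le C'r_bn^2$ for an absolute constant~$C'$, but the ingredients you list do not deliver this and the claim is false. A type~3 core is determined by an unordered pair of $G$-connected, non-$G$-isomorphic chief factors in one crown, so a crown of length~$t_j$ contributes $\binom{t_j}{2}$ cores, each carrying up to $n^2$ core-free maximal subgroups by Theorem~\ref{th-b-2}; this is genuinely quadratic in~$t_j$. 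The Dalla~Volta--Lucchini generation results bound $t_j$ only by something of order $n^{\dg(G)}$ (this is exactly the content of the paper's Lemma on $\rkm_n(G)\le n^{\dg(G)}$), not by an absolute constant: already for $G=S^k$ with $S$ non-abelian simple and $k$ near the maximum compatible with $\dg(G)$-generation, the single crown has length of order $n^{\dg(G)-1}$ and $\binom{t_1}{2}$ is far larger than any fixed multiple of $t_1$.

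The paper's route is different on two counts. First, it keeps the quadratic count $c_n\le n^2\sum_j\binom{t_j}{2}\le n^2\,\rkm_n(G)\,\rko_n(G)/2$ and absorbs one crown-length factor via $\rkm_n(G)\le n^{\dg(G)}$, obtaining $c_n\le n^{\dg(G)+2}\rko_n(G)/2$. Second, and this is what you are missing to make the final arithmetic close, it exploits that the set $\mathbb{T}$ of prime powers and the set $\mathbb{S}$ of orders of non-abelian characteristically simple groups are disjoint, so for a fixed~$n$ types~1 and~3 never occur together. One then argues casewise: for $n\in\mathbb{T}$ only types~1 and~2 appear and $\maxi_n(G)\le n^{\dg(G)}r+n^2r\le 2n^{\dg(G)}r\le n^{\dg(G)+2}r$; for $n\in\mathbb{S}$ only types~2 and~3 appear and $\maxi_n(G)\le n^2r+n^{\dg(G)+2}r/2\le n^{\dg(G)+2}r$; otherwise only type~2 appears. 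Without this disjointness, simply summing $r_an^{\dg(G)+2}+r_bn^2+r_bn^{\dg(G)+2}/2$ would overshoot $rn^{\dg(G)+2}$.
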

This bound will appear as a consequence of a tighter bound in terms of the invariants $\abcr_n(G)$, $\rks_n(G)$, $\rkm_n(G)$, and $\rko_n(G)$ presented in Theorem~\ref{th-mn-long}.
We also obtain a bound for $\invnu(G)$.
\begin{theorema}\label{th-A}
  Let $G$ be a $d$-generated non-trivial group with $d\ge 2$. Then
  \[%
  \invnu(G)\le \eta(G),\]%
where %
$\eta(G)$
is a function bounded by a linear combination of $d$ and the maxima of
$\log_n\abcr_n(G)$, $\log_n\rks_n(G)$, $\log_n\rko_n(G)$, and
$\log_n\rkm_n(G)$.
\end{theorema}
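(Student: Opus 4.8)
The plan is to reduce the statement to a bound on the invariant $\mathcal{M}(G)=\max_{n\ge 2}\log_n\maxi_n(G)$ and then to estimate $\maxi_n(G)$ itself, for each fixed $n\ge 2$, by the refined bound of Theorem~\ref{th-mn-long}. By the left-hand inequality of Theorem~\ref{th-Lubotzky-M} we have $\invnu(G)\le\mathcal{M}(G)+2.02$, so it suffices to show that $\mathcal{M}(G)$ is bounded by a linear combination of $\dg(G)$ and of the quantities $\max_{m\ge2}\log_m\abcr_m(G)$, $\max_{m\ge2}\log_m\rks_m(G)$, $\max_{m\ge2}\log_m\rko_m(G)$ and $\max_{m\ge2}\log_m\rkm_m(G)$. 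The mechanism is multiplicative-to-additive: once $\maxi_n(G)$ is bounded, for each fixed $n$, by a product of a power $n^{\alpha\dg(G)+\beta}$ with bounded powers of the four invariants evaluated at that same $n$, taking $\log_n$ turns the product into a sum, each summand being a linear combination of $\dg(G)$ and of one $\log_n$ of an invariant, and the latter is at most the corresponding maximum over $m$; maximising over $n$ then yields the desired $\eta(G)$.

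The core step is therefore the bound for $\maxi_n(G)$, which I would obtain by splitting the maximal subgroups of index $n$ according to the Baer type of the primitive quotient $G/M_G$ they induce. For type~1 the quotient has abelian socle, so $M$ complements an abelian chief factor of order $n$; grouping these by $G$-isomorphism class gives $\abcr_n(G)$ crowns, and inside a single crown $W\cong A^{r}$ each such $M$ is a complement to a top factor $W/U\cong A$. The count within a crown is then the number of maximal $G$-submodules $U$ with $W/U\cong A$ times the number of complements to $W/U$ in $G/U$; the first factor is controlled geometrically by $n$, while the second, the number of complements (derivations), is bounded, using $\cohom^{1}(G/W,A)$ and the $\dg(G)$-generation of $G$, by $n^{\OLandau(\dg(G))}$. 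This gives a type~1 contribution of the shape $\abcr_n(G)\,n^{\OLandau(\dg(G))}$, whose $\log_n$ is $\log_n\abcr_n(G)+\OLandau(\dg(G))$.

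For the non-abelian types I would pass to the associated primitive groups. A type~2 maximal subgroup is core-free in some $[A]*G$ with $A$ a non-abelian chief factor, and Theorem~\ref{th-b-2} bounds by $n^{2}$ the number of core-free maximal subgroups of index $n$ of that primitive group; since the number of chief factors contributing at index $n$ is exactly $\rks_n(G)$, the type~2 contribution is of the shape $\rks_n(G)\,n^{\OLandau(1)}$. The type~3 maximal subgroups are the diagonal-type ones attached to a crown of non-abelian chief factors of order $n$: here the number of factors is measured by $\rko_n(G)$ and the length of the crown by $\rkm_n(G)$, and the diagonal subgroups are counted by controlling the number of generators of diagonal-type subgroups of direct powers of a monolithic group, in the spirit of \cite{DallaVoltaLucchini98,DallaVoltaLucchiniMorini01}, again combined with Theorem~\ref{th-b-2}. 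This produces a type~3 contribution polynomial in $\rko_n(G)$, $\rkm_n(G)$ and $n$. Adding the three contributions multiplies the count by at most the number of types, which only adds the bounded constant $\log_n 3$ after taking $\log_n$; collecting everything gives $\mathcal{M}(G)\le\eta(G)-2.02$, and Theorem~\ref{th-Lubotzky-M} finishes the proof.

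The main obstacle I expect is the type~3 analysis. A single characteristically simple normal section of $G$ can contain many $G$-isomorphic non-abelian chief factors, and the diagonal-type maximal subgroups proliferate with the crown length; the delicate point is to convert this raw dependence on the crown length into the logarithmic invariant $\log_n\rkm_n(G)$ rather than a linear dependence on $\rkm_n(G)$. This requires the sub-multiplicative behaviour of the number of diagonal and complement-type subgroups in direct powers, fed by the generation results of Dalla Volta, Lucchini and Morini, and by the uniform bound $b=2$ of Theorem~\ref{th-b-2}. The abelian type~1 case presents the same issue in a milder form, the crown length there being absorbed into the cohomological complement count and the $\dg(G)$-generation bound.
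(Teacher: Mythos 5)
Your skeleton is the paper's own: reduce to $\mathcal{M}(G)$ via Theorem~\ref{th-Lubotzky-M} (at the cost of $+2.02$), split $\maxi_n(G)$ by the Baer type of $G/M_G$, count type~1 crown by crown with the Dalla Volta--Lucchini cohomological machinery (the paper's Theorem~\ref{th-nd-abelian-crown} and Corollary~\ref{corol-max-type-1}, giving $(n^d-1)\abcr_n(G)$), count type~2 as $\rks_n(G)\,n^2$ by identifying cores with $G$-isomorphism classes of socles and invoking Theorem~\ref{th-b-2}, and finally take $\log_n$ and maximise. Types 1 and 2 therefore match the paper (modulo a harmless imprecision in type~1: the $d$-dependence enters through the bound on the crown length $k$ via $\dg(L_k)\le d$, not through the complement count, which is $n\lvert\cohom^1(L/A,A)\rvert$ per core; the two factors combine to give $n^d-1$).

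The genuine gap is type~3, which you leave unresolved --- you flag it as ``the main obstacle'' and gesture at the Dalla Volta--Lucchini--Morini generation results for diagonal subgroups of direct powers. That machinery is not what is needed (the paper uses it only in the abelian case), and it is not clear your route would close. The missing idea is elementary and structural: if $M$ is of type~3 and index $n$, then $\Soc(G/M_G)$ is the product of exactly two minimal normal subgroups, which are $G$-isomorphic to two distinct, $G$-connected (hence non-$G$-isomorphic) chief factors lying in \emph{one and the same} crown of $G$, and $M_G$ is the intersection of the centralisers of those two chief factors. So the number of possible cores is at most the number of unordered pairs within a crown, namely $\sum_{j=1}^{s_n}t_j(t_j-1)/2\le \rkm_n(G)\rko_n(G)/2$ where $t_1,\dots,t_{s_n}$ are the crown lengths, and Theorem~\ref{th-b-2} contributes at most $n^2$ core-free maximal subgroups of index $n$ above each core; this is exactly Theorem~\ref{th-upper-max-3} and Corollary~\ref{corol-upper-max-3-bis}, and gives the bound $n^2\rkm_n(G)\rko_n(G)/2$ in case~\ref{en-mn-long-2} of Theorem~\ref{th-mn-long}. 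Note also that your closing worry --- that one must somehow trade a linear dependence on $\rkm_n(G)$ for a logarithmic one --- is a red herring contradicted by your own first paragraph: any bound on $\maxi_n(G)$ polynomial in the four invariants and in $n^{d}$ becomes, after applying $\log_n$, a linear combination of $d$, absolute constants, and the quantities $\log_n\abcr_n(G)$, $\log_n\rks_n(G)$, $\log_n\rko_n(G)$, $\log_n\rkm_n(G)$; the quadratic pair count above is therefore exactly good enough, and is how the paper arrives at the term $4.02+\log_n\rkm_n(G)+\log_n\rko_n(G)$ in Theorem~\ref{th-A-detailed}.
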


The precise value of the function $\eta(G)$ will be presented in detail in
Theorem~\ref{th-A-detailed}. %

We %
obtain upper bounds for the number of maximal subgroups of a given
index in a group in Section~\ref{sect-upper-bounds}. 
We show that the value of the constant in the bounds of %
Detomi and Lucchini
\cite{DetomiLucchini03} (Theorem~\ref{th-DetomiLucchini03-20}) can be computed
from Theorem~\ref{th-A}. This will be done in Section~\ref{sect-comp}.
 We also present a general procedure
(Construction~\ref{const-hat}) to obtain, starting from a
$d$-generated primitive group $H$ of type~1, a $d$-generated group $G$
with all possible $G$-isomorphism classes of chief factors isomorphic
to the socle of $H$ and with associated primitive groups isomorphic
to~$H$. Similar ideas (see Remark~\ref{rem-many-crowns}) can be used
to create a $d$-generated group with all possible crowns associated to non-Frattini
abelian chief factors of a given order whose associated primitive
groups are $d$-generated.

\section{Upper bounds for the number of maximal subgroups of a given
  index in a group}\label{sect-upper-bounds}

\subsection{Bounds for the number of maximal subgroups of type~1}
Let $L$ be a non-cyclic group with a unique complemented abelian minimal normal
subgroup $N$. Assume also that $N$ is complemented in $L$. As in
\cite{DallaVoltaLucchini98}, let $L^k$
be the direct product of $k$ copies of $L$ and set
\[L_k=\{(l_1,\dots, l_k)\in L^k\mid l_1\equiv\dots\equiv
l_k\pmod{N}\}.\]
Then $L_k$ is the product of $N^k$ and a diagonal subgroup
\[\diag L^k=\{(l,\dots, l)\in L^k\mid l\in L\}\]
of $L^k$. The socle of $L_k$ is a direct product of $k$ minimal normal
subgroups of $L_k$, each of them isomorphic to $N$, and that
$L_k/N^k\cong L/N$. The quotient of $L_{k+1}$ over a minimal normal
subgroup of $L_{k+1}$ is isomorphic to $L_k$. 
We have that $\dg(L_1)\le
\dg(L_2)\le \dg(L_3)\le\dotsb$ and, according to \cite{Lucchini95},
$\dg(L_{k+1})\le \dg(L_k)+1$ for every $k$. Hence, if $d\ge \dg(L)$, then
there is a unique $k$ such that $\dg(L_k)=d<\dg(L_{k+1})$. Let us call
$f(d)=k+1$. Then, by \cite{DallaVoltaLucchini98}, 
\[f(d)=1+\log_q\bigl(\lvert N\rvert^{d-1}\big/\lvert
\cohom^1(L/N,N)\rvert\bigr),\]
where $q=\lvert {\End_{L/N}(N)}\rvert$.

\begin{theorem}\label{th-nd-abelian-crown}
  Let $U$ be a maximal subgroup of type~1 of a $d$-generated group $G$ and let $n=\lvert G:U\rvert$. If $U$ is not normal in $G$, then the
  number of maximal subgroups $M$ of $G$ such that $\Soc(G/M_G)$ is
  $G$-isomorphic to $\Soc(G/U_G)$ is less than or equal to 
  \[\frac{n^{d}-n\lvert \cohom^1(G/C, A)\rvert}{q-1},\]
  where $A=C/U_G$ is the unique minimal normal subgroup of $G/U_G$ and 
  $q=\lvert\End_{G/C}(A)\rvert$. If $U$ is normal in $G$, then the number of maximal subgroups $M$ of $G$ such that $\Soc(G/M_G)$ is $G$-isomorphic to $\Soc(G/U_G)$ is less than or equal to
  \[\frac{n^d-1}{n-1}.\]
  In all cases, this number is less
  than or equal to $n^d-1$.
\end{theorem}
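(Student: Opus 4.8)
The plan is to pass to the crown associated with $A$ and reduce the count to counting submodules and complements. Set $h=\lvert\cohom^1(G/C,A)\rvert$ and let $L=[A]*G$ be the primitive group of type~1 associated with the chief factor $A=C/U_G$; thus $L=A\rtimes(G/C)$ has socle $N=A$, and $L/N\cong G/C$ acts faithfully and irreducibly on $A$ with $\End_{G/C}(A)=\GF(q)$. Let $R$ be the base of the $G$-crown of $A$. By Gasch\"utz's theory of crowns (see \cite[Chapter~1]{BallesterEzquerro06}), the quotient $G/R$ is isomorphic to $L_k$, where $k$ is the number of chief factors in the $A$-crown; its socle $W/R$ is $G$-isomorphic to $A^{k}$, is self-centralizing in $G/R$, and $(G/R)/(W/R)\cong G/C$ acts diagonally on it.

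First I would show that the maximal subgroups to be counted correspond bijectively to pairs consisting of a submodule and a complement. Every complemented chief factor $G$-isomorphic to $A$ lies in the $A$-crown, so each maximal subgroup $M$ with $\Soc(G/M_G)$ $G$-isomorphic to $A$ contains $R$ and satisfies $MW=G$. Writing $\bar M=M/R$ and $\bar W=W/R$, the intersection $V=\bar M\cap\bar W$ is then a $G/C$-submodule of $\bar W$ with $\bar W/V\cong A$, its core in $G/R$ is exactly $V$, and $\bar M/V$ is a complement to $\bar W/V$ in $(G/R)/V$. Conversely every such pair $(V,\bar M)$ yields one of the maximal subgroups we are counting, so $M\mapsto(V,\bar M)$ is a bijection.

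Next I would count each factor. Since $A$ is irreducible, $\bar W\cong A^{k}$ is a semisimple $G/C$-module and $\Hom_{G/C}(\bar W,A)\cong\GF(q)^{k}$; the submodules $V$ with $\bar W/V\cong A$ are the kernels of the nonzero homomorphisms, and two homomorphisms share a kernel precisely when they differ by a scalar in $\GF(q)^{\times}$, so there are $(q^{k}-1)/(q-1)$ of them. For a fixed $V$, the group $(G/R)/V$ is a split extension of $A$ by $G/C$, whose complements to $A$ biject with the cocycles in $Z^1(G/C,A)$; their number is $h\cdot\lvert A:\Cent_A(G/C)\rvert$. When $U$ is not normal, $G/C\neq1$ acts faithfully and irreducibly, hence $\Cent_A(G/C)=0$ and each $V$ carries $hn$ complements; when $U$ is normal, $G/C=1$, so $h=1$, $\Cent_A(G/C)=A$, and each $V$ carries a single complement. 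Multiplying, the total count is $(q^{k}-1)(q-1)^{-1}hn$ in the non-normal case and $(q^{k}-1)/(q-1)$ in the normal case.

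The key remaining point, and the main obstacle, is to bound the crown length $k$ by the generation number. Since $G/R\cong L_k$ is an epimorphic image of the $d$-generated group $G$, we have $\dg(L_k)\le d$, so $k$ is at most the largest $j$ with $\dg(L_j)\le d$. The Dalla Volta--Lucchini formula recalled above gives $f(d)=1+\log_q\bigl(\lvert A\rvert^{d-1}/h\bigr)$, whence $q^{k}\le q^{f(d)-1}=n^{d-1}/h$ in the non-normal case. Substituting this into $(q^{k}-1)(q-1)^{-1}hn$ gives at most $(n^{d-1}/h-1)(q-1)^{-1}hn=(n^{d}-hn)/(q-1)$, as claimed; in the normal case $q=n$ and $k\le d$ yield $(q^{k}-1)/(q-1)\le(n^{d}-1)/(n-1)$. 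Finally, since $q\ge2$ and $h\ge1$, both expressions are at most $n^{d}-1$ whenever $n\ge2$. The delicate steps are the identification $G/R\cong L_k$ together with the claim that the counted subgroups are exactly the (submodule, complement) pairs; the remaining counting is a routine application of Schur's lemma and the cocycle description of complements.
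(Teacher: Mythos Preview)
Your proof is correct and follows essentially the same route as the paper: pass to the crown $G/R\cong L_k$, count cores as the $(q^k-1)/(q-1)$ maximal submodules of the socle, multiply by the $n\lvert\cohom^1(G/C,A)\rvert$ complements per core, and bound $k$ via the Dalla~Volta--Lucchini formula $q^{k}\le n^{d-1}/\lvert\cohom^1(G/C,A)\rvert$. The only cosmetic difference is that you obtain the submodule count directly from $\Hom_{G/C}(A^{k},A)\cong\GF(q)^{k}$ and Schur's lemma, whereas the paper quotes \cite[Lemma~2.5]{DallaVoltaLucchini98} for the same number.
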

\begin{proof}
 Consider the crown $C/R$ associated to $A$, Observe that
  $C/U_G=\Soc(G/U_G)$. If
  \[L=[A]*G=[A]\bigl(G/{\Cent_G(C/U_G)}\bigr)=[A](G/C)\cong G/U_G\] is the
  primitive group associated to $A$ and $C/R$ is the direct product of
  $k$ chief factors isomorphic to $A$, then $G/R\cong L_k$.

    Let us suppose first that $U$ is not normal.  Note that
  $\dg(L_k)\le d=\dg(G)$.  By the comments after \cite[Theorem~2.7]{DallaVoltaLucchini98}, we have that
  \begin{equation}
    k \le \log_q\frac{n^{d-1}}{\lvert\cohom^1(L/A,A)\rvert}.
    \label{eq-DVL-2.7-b}
  \end{equation}
  
  Let $M$ be a maximal subgroup of $G$ such that $\Soc(G/M_G)$ is  $G$-isomorphic to $A$. We have that $\Soc(G/M_G)=C/M_G$ and $R\le M_G\le C$. Moreover, there exists an isomorphism $\varphi\colon G/M_G\longrightarrow L$ satisfying that the composition with the projection over $G/C$, $G/M_G\longrightarrow L\longrightarrow G/C$, has kernel $C$ and gives he identity on $G/C$. Let us now give a bound for the number of possible kernels of epimorphisms from $L_K$ to $L$ giving the identity on $G/C$. By \cite[Lemma~2.5]{DallaVoltaLucchini98}, the number of possible kernels of epimorphisms from $L_k$ onto $L$ giving the identity on $G/C$ is $(q^k-1)/(q-1)$. By Equation~\eqref{eq-DVL-2.7-b},
  \[\frac{q^k-1}{q-1}\le \frac{\frac{n^{d-1}}{\lvert \cohom^1(L/A,A)\rvert}-1}{q-1}=\frac{n^{d-1}-\lvert \cohom^1(L/A,A)\rvert}{(q-1)\lvert \cohom^1(L/A, A)\rvert}.\]

The number of maximal subgroups of type~1 with a given core coincides with the number of complements of the minimal normal subgroup of the associated primitive group, namely $n\lvert \cohom^1(L/A, A)\rvert$. Hence the number of maximal subgroups $M$ of $G$ such that $\Soc(G/M_G)$ is $G$-isomorphic to $A=\Soc(L)$ is less than or equal to
  \[\frac{n^{d-1}-\lvert\cohom^1(L/A, A)\rvert}{(q-1)\lvert \cohom^1(L/A, A)\rvert}\cdot n\lvert \cohom^1(L/A, A)\rvert=\frac{n^d-n\lvert \cohom^1(L/A,A)\rvert}{q-1}.\]

  Suppose now that $A$ is central, that is, that $U$ is normal in
  $G=C$, then $n$ is a prime and $G/R=C/R$ is a direct product of $k$
  cyclic groups isomorphic to $C_n$. Since $G$ is $d$-generated, $k\le
  d$. In this case, $q=n$ is a prime number and the number of normal subgroups $M$ of index~$n$ such that $R\le M\le G$ is
  \[\frac{n^k-1}{n-1}\le \frac{n^d-1}{n-1}.\qedhere\]
\end{proof}

\begin{remark}
  The bound of Theorem~\ref{th-nd-abelian-crown} is attained in groups
  like $L_k$, where $L\cong \Sym(4)$. In this group, $q=2$, $n=4$,
  and 
  $\lvert\cohom^1(G/C, A)\rvert=1$. If $k=f(d)-1=2d-2$, for $d\ge 2$, we
  obtain that the number of maximal subgroups of index $4$ of $L_k$ is
  exactly 
  \[\frac{4^d-4}{2-1}=4^d-4.\]
\end{remark}

\begin{corollary}\label{corol-max-type-1}
  The number of maximal subgroups $M$ of type~1 and index $n=p^r$ of a
  $d$-generated group $G$ is less than or equal to $(n^{d}-1){\abcr_n(G)}$.
\end{corollary}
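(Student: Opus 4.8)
The plan is to group the maximal subgroups of type~1 and index~$n$ according to the $G$-isomorphism class of the socle of the primitive quotient they induce, and then to bound the size of each group separately by means of Theorem~\ref{th-nd-abelian-crown}.

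First I would recall the structure forced by the hypothesis. If $M$ is a maximal subgroup of type~1, then the associated primitive group $G/M_G$ has a unique minimal normal subgroup $\Soc(G/M_G)$, which is abelian and complemented by $M/M_G$; since $\lvert G:M\rvert=n=p^r$, this socle is a complemented abelian chief factor of $G$ of order~$n$. Hence every maximal subgroup of type~1 and index~$n$ determines, through the $G$-isomorphism class of $\Soc(G/M_G)$, one of the crowns associated to complemented abelian chief factors of order~$n$, and by definition the number of such classes is exactly $\abcr_n(G)$.

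Next I would partition the set of maximal subgroups of type~1 and index~$n$ into at most $\abcr_n(G)$ blocks, one for each $G$-isomorphism class of the induced socle. Choosing a representative $U$ in each non-empty block, Theorem~\ref{th-nd-abelian-crown} bounds the number of maximal subgroups $M$ with $\Soc(G/M_G)$ $G$-isomorphic to $\Soc(G/U_G)$ by $n^{d}-1$; this is the uniform estimate of that theorem, valid in both the normal and the non-normal case. Summing over the at most $\abcr_n(G)$ blocks then yields the bound $(n^{d}-1)\abcr_n(G)$.

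The only point that needs care is the identification of these blocks with the crowns counted by $\abcr_n(G)$: I would verify that two such maximal subgroups lie in the same block precisely when their induced socles belong to the same $G$-isomorphism class of complemented abelian chief factors, so that the number of blocks is controlled by $\abcr_n(G)$ rather than by the total number of maximal subgroups. This is immediate from the definition of $\abcr_n(G)$ together with the standard correspondence between primitive quotients of type~1 and complemented abelian chief factors, so I expect no genuine obstacle here; the substantive content has already been carried out in Theorem~\ref{th-nd-abelian-crown}.
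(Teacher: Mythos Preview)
Your proposal is correct and follows essentially the same approach as the paper: partition the type~1 maximal subgroups of index~$n$ by the $G$-isomorphism class of the induced socle, note that there are at most $\abcr_n(G)$ such classes, and apply Theorem~\ref{th-nd-abelian-crown} to bound each class by $n^d-1$. Your write-up is slightly more explicit about why the induced socle is a complemented abelian chief factor of order~$n$, but the argument is otherwise identical.
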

\begin{proof}
  Given a
  maximal subgroup $U$ of $G$ of type~1 and index $n$, we have that its socle $\Soc(G/U_G)$
  must fall into one of the $G$-isomorphism classes of chief
  factors. %
  By
  Theorem~\ref{th-nd-abelian-crown}, we obtain that the number of
  maximal subgroups $M$ of $G$ such that $\Soc(G/M_G)\cong_G
  \Soc(G/U_G)$ is bounded
  by $n^{d}-1$. Since the number of $G$-isomorphism classes of chief
  factors of order $n$ is $\abcr_n(G)$, we obtain that the number of maximal
  subgroups of type~1 and index $n$ of $G$
  must be bounded by $(n^{d}-1){\abcr_n(G)}$.
\end{proof}

\subsection{Bounds for the number of maximal subgroups of type~2}

The next result provides a bound for the number of maximal subgroups
of type~2 of a given index of a group.
\begin{theorem}\label{th-upper-max-2}
  Let $G$ be a group and let $n$ be a natural number. The number of
  maximal subgroups of $G$ of type~2 and index~$n$ is bounded by
  ${\rks_n(G)}n^2$.
\end{theorem}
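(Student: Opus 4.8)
The plan is to partition the maximal subgroups of type~2 and index~$n$ according to their normal cores, and then to bound separately the number of subgroups sharing a given core and the number of cores that can occur. For a core $K$, the quotient $G/K$ is a primitive group of type~2, and each maximal subgroup $M$ of type~2 with $M_G=K$ gives a core-free maximal subgroup $M/K$ of $G/K$ of index~$n$. By Theorem~\ref{th-b-2}, the group $G/K$ has at most $n^2$ core-free maximal subgroups of index~$n$, so there are at most $n^2$ maximal subgroups $M$ of type~2 and index~$n$ with $M_G=K$. It therefore suffices to show that the number of cores $K$ arising in this way is at most $\rks_n(G)$, and then multiply the two estimates.

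Next I would attach to each such core a non-abelian chief factor. Since $G/K$ is primitive of type~2, its socle $N/K=\Soc(G/K)$ is a non-abelian minimal normal subgroup with $\Cent_G(N/K)=K$, so the associated primitive group $[N/K]*G=G/{\Cent_G(N/K)}=G/K$ possesses the core-free maximal subgroup $M/K$ of index~$n$. Hence $N/K$ is a non-abelian chief factor whose associated primitive group has a core-free maximal subgroup of index~$n$, and it belongs to one of the finitely many $G$-isomorphism classes $A$ of such chief factors. Recalling that a non-abelian chief factor is never Frattini, the number of chief factors of the fixed chief series that are $G$-isomorphic to a given $A$ equals the length $m_A$ of the $G$-crown $C_A/R_A$ associated with $A$, and $\rks_n(G)=\sum_A m_A$, the sum running over those classes.

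It then remains to bound, for a fixed class $A$, the number of cores $K$ with $\Soc(G/K)\cong_G A$ by $m_A$. Each such $K$ satisfies $R_A\le K$, since $K=\Cent_G(N/K)$ contains the intersection of the centralizers of the chief factors $G$-isomorphic to $A$; moreover $C_A/K$ is the unique minimal normal subgroup of $G/K$, so by the crown theory of \cite[Chapter~1]{BallesterEzquerro06} the core $K$ corresponds to a maximal $G$-invariant subgroup of the homogeneous crown $C_A/R_A\cong_G A^{m_A}$ with quotient $G$-isomorphic to $A$. Here I would use the decisive non-abelian feature: a $G$-epimorphism from $A^{m_A}$ onto $A$ must be trivial on all but one of the $m_A$ direct factors, because the images of two distinct factors would commute and generate the non-abelian group $A$, which is impossible; consequently each such kernel is the product of all but one of the factors, giving at most $m_A$ possibilities for $K$. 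Summing over the classes $A$ yields at most $\sum_A m_A=\rks_n(G)$ cores, and multiplying by the per-core bound $n^2$ gives the desired estimate $\rks_n(G)\,n^2$.

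The main obstacle is the crown-counting of the last paragraph: identifying the admissible cores with maximal $G$-invariant subgroups of the homogeneous crown and exploiting non-commutativity to pin their number down to the crown length $m_A$. By contrast, the per-core bound via Theorem~\ref{th-b-2} and the translation of each core into its associated primitive group of type~2 are comparatively routine.
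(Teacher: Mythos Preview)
Your overall strategy---partition the type-2 maximal subgroups by their cores, bound the number with a given core by $n^2$ via Theorem~\ref{th-b-2}, then count the cores---is exactly the paper's. The per-core bound and the identification of $G/K$ with the primitive group $[N/K]*G$ are fine.

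The problem is the crown detour in your last paragraph. Your claim that ``the number of chief factors of the fixed chief series that are $G$-isomorphic to a given $A$ equals the length $m_A$ of the $G$-crown $C_A/R_A$'' is false for non-abelian chief factors: the crown collects the chief factors that are $G$-\emph{connected} to $A$, and in the non-abelian case these are pairwise non-$G$-isomorphic (indeed, two non-abelian chief factors in a chief series have distinct centralisers, since the centraliser of $G_i/G_{i-1}$ contains $G_{i-1}$ but not $G_i$). Thus each $G$-isomorphism class appears exactly once in the chief series, while $m_A$ can be arbitrarily large. Consequently $\sum_A m_A$ (summed over $G$-isomorphism classes) equals $\sum_{\text{crowns}} t^2$ rather than $\sum_{\text{crowns}} t=\rks_n(G)$, and your chain of inequalities does not land on $\rks_n(G)$. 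The argument about $G$-epimorphisms from ``$A^{m_A}$'' also presupposes that all factors of the crown are $G$-isomorphic to $A$, which is precisely what fails.

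The fix is already in your hands. You correctly observe that $K=\Cent_G\bigl(\Soc(G/K)\bigr)$ for a type-2 core. Hence distinct cores give non-$G$-isomorphic socles, and by Jordan--H\"older each such $G$-isomorphism class is represented by a (unique) non-abelian chief factor in the fixed chief series whose associated primitive group has a core-free maximal subgroup of index $n$. That count is exactly $\rks_n(G)$, with no crowns needed; this is what the paper does, citing \cite[Proposition~1.2.4]{BallesterEzquerro06}.
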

\begin{proof}
  Let $M$ and $U$ be two maximal subgroups of $G$ of type~2 and
  index~$n$,  By \cite[Proposition~1.2.4]{BallesterEzquerro06}, $M_G=\Cent_G\bigl({\Soc(G/M_G)}\bigr)$ and
  $U_G=\Cent_G\bigl({\Soc(G/U_G)}\bigr)$ coincide if, and only if,
  $\Soc(G/M_G)$ and $\Soc(G/U_G)$ are $G$-isomorphic. Hence by the
  Jordan-H\"older theorem, if $M_G\ne U_G$, then in a given chief series of $G$ there exists
  a chief factor $G$-isomorphic to $\Soc(G/M_G)$ and another chief
  factor $G$-isomorphic to $\Soc(G/U_G)$. Consequently the number of
  primitive quotients of $G$ associated to a maximal subgroup of
  index~$n$ is $\rks_n(G)$. By Theorem~\ref{th-b-2}, the number
  of core-free maximal subgroups of index~$n$ of $G/U_G$ for a 
  maximal subgroup $U$ of type~2 of~$G$ is bounded by
  $n^2$, that is, the number of maximal subgroups of index~$n$ of $G$
  with core $U_G$ is bounded by $n^2$. Hence the number of maximal subgroups of $G$ of type~2 and index~$n$ 
  is bounded by ${\rks_n(G)}n^2$.
\end{proof}

\subsection{Bounds for the number of maximal subgroups of type~3}
Now we analyse the maximal subgroups of type~3. The following two results show that the minimal normal subgroups of primitive groups of type~3 fall in at most two isomorphism classes. Although this is not needed for the proofs of our results, we include this information here for the sake of completeness. The first result appears as a consequence of the classification of
simple groups.
\begin{lemma}\label{lemma-2-same-order}
  The number of non-abelian simple groups of a given order is
  at most~$2$.
\end{lemma}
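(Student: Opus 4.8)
The plan is to invoke the Classification of Finite Simple Groups (CFSG), according to which every non-abelian finite simple group is an alternating group $\Alt(m)$ with $m\ge 5$, a simple group of Lie type, or one of the $26$ sporadic groups. Under this dictionary the lemma is equivalent to the assertion that no integer is the order of three pairwise non-isomorphic simple groups. This is a sharpened form of a classical theorem of Artin to the effect that the order of a finite simple group almost determines its isomorphism type, now provable in full generality because CFSG supplies the complete list of candidates together with explicit order formulas.

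First I would reduce the problem to comparing these explicit order formulas across the three families above. The $26$ sporadic orders are pairwise distinct and distinct from the generic orders, so they produce no coincidences. For the alternating groups, $\lvert\Alt(m)\rvert=m!/2$ is strictly increasing in $m$, so two alternating groups never share an order, and the only remaining task in that direction is to match an alternating order against a Lie-type order. For the groups of Lie type one substitutes the standard product formulas for their orders and compares them family by family, using the defining characteristic $p$, the untwisted rank, and the exact power of $p$ dividing the order as the principal discriminating invariants.

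The main obstacle, and the technical heart of the argument, is the case analysis establishing that the only genuine equalities of orders between non-isomorphic simple groups are the two classical ones:
\[\lvert\PSL_4(2)\rvert=\lvert\Alt(8)\rvert=\lvert\PSL_3(4)\rvert=20160\]
and
\[\lvert\PSp_{2n}(q)\rvert=\lvert P\Omega_{2n+1}(q)\rvert\qquad(n\ge 3,\ q\text{ odd}),\]
where $\PSL_4(2)\cong\Alt(8)$ so that each displayed coincidence involves exactly two isomorphism types. Confirming that no further equalities arise requires checking that, within a single family, the parameters are recovered from the order, and that the leading $p$-powers and the characteristics separate the different families outside the two exceptional situations above. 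Since every remaining order is realized by a unique non-abelian simple group, and each exceptional order by exactly two, no order is shared by more than two such groups, which is the assertion of the lemma.
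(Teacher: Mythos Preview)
Your sketch is correct and is precisely the route the paper alludes to: the paper does not actually prove this lemma but merely records it as ``a consequence of the classification of simple groups,'' with the detailed case analysis you outline being carried out in the reference cited for the subsequent Theorem~\ref{th-power-order} (Kimmerle, Lyons, Sandling, and Teague). Your identification of the two exceptional coincidences---$\PSL_3(4)$ versus $\PSL_4(2)\cong\Alt(8)$, and $\PSp_{2n}(q)$ versus $\Orth_{2n+1}(q)$ for $n\ge 3$ and odd $q$---matches exactly what that reference establishes.
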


A generalisation of this result to characteristically simple groups was formulated as a question by
Cameron \cite{Cameron81}, who attributed its proof to Teague in
Note~(ii) at the end of his paper.
\begin{theorem}[see
  {\cite[Theorem~6.1]{KimmerleLyonsSandlingTeague90}}]\label{th-power-order}
  Let $S$ and $T$ be non-isomorphic finite simple groups. If $\lvert
  S^a\rvert = \lvert T^b\rvert$ for some natural numbers $a$ and $b$,
  then $a=b$ and $S$ and $T$ are either $\PSL_3(4)$ or $\PSL_4(2)$, or
  are $\Orth_{2n+1}(q)$ and $\PSp_{2n}(q)$ for some $n\ge 3$ and some
  odd~$q$.
\end{theorem}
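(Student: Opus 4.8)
The plan is to prove Theorem~\ref{th-power-order}, which states that non-isomorphic simple groups $S$ and $T$ with $\lvert S^a\rvert=\lvert T^b\rvert$ must satisfy $a=b$ and fall into one of two exceptional families. Since the paper explicitly attributes this to Teague and cites \cite{KimmerleLyonsSandlingTeague90}, the proof must rest on the classification of finite simple groups (CFSL). First I would reduce to the case $a=b$: the equation $\lvert S\rvert^a=\lvert T\rvert^b$ forces $\lvert S\rvert$ and $\lvert T\rvert$ to be multiplicatively dependent, so $\lvert S\rvert=m^s$ and $\lvert T\rvert=m^t$ for a common base $m$ and integers $s,t$ with $sa=tb$. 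The key arithmetic input is a theorem (due to Artin and refined by others) that the order of a finite simple group is not a proper power of an integer except in very restricted circumstances; combined with the factorisation structure of simple group orders, this should pin down $a=b$ and reduce the problem to classifying pairs $S\not\cong T$ with $\lvert S\rvert=\lvert T\rvert$.

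The main step is then the order-coincidence problem: which non-isomorphic simple groups share the same order? By CFSL every non-abelian simple group is alternating, a group of Lie type, or sporadic. I would compare orders family by family using the explicit order formulas. The alternating groups $\Alt(n)$ have order $n!/2$, and one checks these orders are distinct and, except for the classical small coincidences, do not match Lie-type orders; the sporadic groups have known distinct orders that one tabulates directly. The substantive work is among the groups of Lie type, where the order is a product of $q^{N}$ (the power of the defining characteristic) times a product of cyclotomic-type factors $\prod(q^{d_i}-1)$. Matching two such orders requires equating the power of the characteristic prime and then equating the remaining factors, which is a careful analysis of when two different root systems or different $q$ yield identical order polynomials. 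This analysis is precisely what isolates the two genuine coincidences: $\lvert\PSL_3(4)\rvert=\lvert\PSL_4(2)\rvert=20160$, and the infinite family $\lvert\Orth_{2n+1}(q)\rvert=\lvert\PSp_{2n}(q)\rvert$ for odd $q$, where the orthogonal and symplectic groups of these dimensions have identical order formulas but are non-isomorphic for $n\ge 3$ and odd $q$ (for $n=2$ they coincide via the isomorphism $\Orth_5(q)\cong\PSp_4(q)$, which is why $n\ge 3$ is imposed).

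The hard part will be the exhaustive Lie-type order comparison, since the order polynomials of distinct families can agree on large portions of their factorisations, and ruling out spurious matches demands either delicate prime-counting (for instance bounding the largest primitive prime divisor of $q^d-1$ via Zsygmondy's theorem) or direct appeal to the detailed case analysis in \cite{KimmerleLyonsSandlingTeague90}. Given that this result is a known consequence of CFSL with a published proof, the honest approach for this paper is to cite \cite[Theorem~6.1]{KimmerleLyonsSandlingTeague90} rather than reprove it; I would therefore present the reduction to $a=b$ and the shape of the Lie-type argument as motivation, and defer the complete verification to the cited source. The one subtlety worth flagging is the necessity of excluding the generic isomorphisms among small-rank classical groups (such as $\Orth_5\cong\PSp_4$, $\PSL_2(7)\cong\PSL_3(2)$, and the $\Alt$–Lie-type coincidences), so that the only surviving order coincidences between genuinely non-isomorphic groups are the two advertised families.
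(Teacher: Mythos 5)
Your proposal is correct and matches the paper's approach: the paper gives no proof of Theorem~\ref{th-power-order} at all, simply citing \cite[Theorem~6.1]{KimmerleLyonsSandlingTeague90} (attributing the result to Teague, following Cameron's question), which is exactly what you conclude is the honest course. Your additional sketch of the reduction to $a=b$ and the CFSG-based order-coincidence analysis is reasonable motivation but is not part of, nor required by, the paper's treatment.
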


As in
\cite{JimenezLafuente98}, we use the non-abelian cohomology with the
terminology of Serre in~\cite{Serre64}. Given a $G$-group $B$ and a
$1$-cocycle $\beta\in\centre^1(G,B)$, then $b^{\eta(g)}=b^{gg^\beta}$
defines a homomorphism $\eta\colon G\longrightarrow \Aut B$. The
corresponding $G$-group is denoted by $B_\beta$ and called the
$G$-group obtained from $B$ by torsion via~$\beta$.

\begin{lemma}
  Let $G$ be a $d$-generated group.
  Let $n$ be a natural number which is a power of the order of a non-abelian simple group. Then %
  $\rkm_n(G)\le n^d$.
\end{lemma}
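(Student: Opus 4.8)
\emph{Proof proposal.} The plan is to identify the crown quotient with a subgroup of a direct power of the associated monolithic primitive group, and then to bound its number of socle components by counting non-abelian $1$-cocycles. First I would choose a non-abelian chief factor $A$ of $G$ of order $n$ whose associated $G$-crown has maximal length, so that this length equals $k=\rkm_n(G)$. Since $n$ is a power of the order of a non-abelian simple group, $A$ is characteristically simple and non-abelian, and the primitive group $L=[A]*G$ is monolithic with non-abelian socle $B=\Soc(L)$ of order $\lvert B\rvert=n$ and $\Cent_L(B)=1$. By the theory of crowns (see \cite[Chapter~1]{BallesterEzquerro06}, and \cite{DallaVoltaLucchiniMorini01} for the generation-theoretic description paralleling the abelian construction of Subsection~2.1), the crown quotient $G/R$ is isomorphic to the subgroup
\[
  L_k=\{(x_1,\dots,x_k)\in L^k\mid x_i\equiv x_j\pmod B\text{ for all }i,j\}
\]
of $L^k$, whose socle contains $B^k$ and whose $k$ minimal normal factors $B_1,\dots,B_k$ are the chief factors of the crown. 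As a quotient of the $d$-generated group $G$, the group $L_k$ is itself $d$-generated; fix generators $y_1,\dots,y_d$ of $L_k$.

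Next I would exploit the mod-$B$ congruence defining $L_k$. For each $i$ let $\psi_i\colon L_k\longrightarrow L$ be the $i$-th coordinate projection; each $\psi_i$ is surjective, and the defining condition of $L_k$ gives $\psi_i(y)\equiv\psi_1(y)\pmod B$ for every $y$. Hence, using the non-abelian cohomology with the terminology of Serre introduced above, the map $\delta_i\colon L_k\longrightarrow B$ defined by $\delta_i(y)=\psi_1(y)^{-1}\psi_i(y)$ takes values in $B$ and satisfies the cocycle identity $\delta_i(xy)=\delta_i(x)^{\psi_1(y)}\,\delta_i(y)$, where $L_k$ acts on $B$ through $\psi_1$ by conjugation in $L$; thus $\delta_i\in\centre^1(L_k,B)$. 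The key observation is that distinct coordinates give distinct cocycles: since $B^k\le L_k$, for $i\neq i'$ the element with a non-trivial entry of $B$ in position $i$ and trivial entries elsewhere lies in $L_k$ and is sent by $\psi_i$ and $\psi_{i'}$ to different elements, so $\psi_i\neq\psi_{i'}$ and therefore $\delta_i\neq\delta_{i'}$. Consequently $i\mapsto\delta_i$ is an injection of $\{1,\dots,k\}$ into $\centre^1(L_k,B)$.

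Finally I would bound the set of cocycles. A cocycle $\delta\in\centre^1(L_k,B)$ is completely determined by its values $\delta(y_1),\dots,\delta(y_d)\in B$ on the chosen generators, because the cocycle identity propagates these values to every word in the $y_j$ and their inverses; this yields an injection $\centre^1(L_k,B)\hookrightarrow B^d$, whence $\lvert\centre^1(L_k,B)\rvert\le\lvert B\rvert^d=n^d$. Combining this with the injectivity of $i\mapsto\delta_i$ gives $k\le n^d$, that is, $\rkm_n(G)\le n^d$.

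The hard part, and the step I would treat most carefully, is the structural input $G/R\cong L_k$ together with the inclusion $B^k\le L_k$: this is the precise crown isomorphism in the non-abelian case, which must be quoted in the exact form needed so that the coordinate projections and the congruence modulo $B$ are genuinely available. Once that identification is in place, the verification of the cocycle identity and of the two injectivity statements is routine, and the counting $\lvert\centre^1(L_k,B)\rvert\le n^d$ follows at once; the hypothesis that $n$ be a power of the order of a non-abelian simple group is used only to guarantee that $B$ is a non-abelian socle of order~$n$ with $\Cent_L(B)=1$, so that the factors $B_i$ are non-abelian and the projections behave as required.
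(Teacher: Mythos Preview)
Your argument is correct and lands on the same endpoint as the paper---an injection of the crown factors into a set of $1$-cocycles with values in a non-abelian chief factor of order~$n$, followed by the observation that a cocycle is determined by its values on a $d$-element generating set, giving $\rkm_n(G)\le n^d$---but the middle step is genuinely different. The paper works directly with cocycles on $G$: it invokes \cite[Proposition~1.2]{JimenezLafuente98}, which says that any chief factor $G$-connected to $B$ is $G$-isomorphic to a twist $B_\beta$ for some $\beta\in\centre^1(G,B)$, and this immediately gives the injection without ever naming the crown quotient. You instead pass to the quotient $G/R\cong L_k$, build the cocycles $\delta_i=\psi_1^{-1}\psi_i\in\centre^1(L_k,B)$ by hand from the coordinate projections, and verify their distinctness using $B^k\le L_k$. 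Your route trades the black-box citation of Jim\'enez\textendash Lafuente for the black-box citation of the crown isomorphism $G/R\cong L_k$ (which you rightly flag as the step requiring care); once either input is granted, the remaining verifications are routine in both proofs. The paper's version is a line shorter, while yours has the minor advantage of making the cocycles completely explicit.
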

\begin{proof}
  Consider a crown with a chief factor $A$ of order $n$ and let  $B$ be another chief factor in the same crown as~$A$. We have that $A$ and $B$ are $G$-connected (see \cite[Definition~1.2.5]{BallesterEzquerro06}), but not $G$-isomorphic. According to \cite[Proposition~1.2]{JimenezLafuente98}, there exists a $1$-cocycle $\beta\in\centre^1(G,B)$ such that $A$ is isomorphic to $B_\beta$ as $G$-groups, that is, there exists an isomorphism $\varphi\colon A\longrightarrow B$ such that $a^{g\varphi}=a^{\varphi g g^\beta}$ for each $a\in A$ and $g\in G$. Hence the number of chief factors $G$-connected to $B$ but not $G$-isomorphic to $B$ is bounded by $\lvert \centre^1(G, B)\rvert$ and so $\rkm_n(G)\le \lvert \centre^1(G,B)\rvert$. Since these $1$-cocycles are uniquely determined by the image of a generating system of~$G$, we have that the number of cocycles is at most $n^d$. %
  It follows that $\rkm_n(G)\le n^d$.
\end{proof}
\begin{theorem}\label{th-upper-max-3}
  Let $G$ be a $d$-generated group and let $n$ be a natural number that is a power of the order of a non-abelian simple group. Assume that $G$ has $s_n$ crowns of (non-abelian) chief factors of order~$n$ that are the products of $t_1$, $t_2,\dots$, $t_{s_n}$ chief factors, respectively.  The number of maximal subgroups of~$G$ of type~3 and index~$n$ is bounded by
  \[n^2\sum_{j=1}^{s_n}\frac{t_j(t_j-1)}{2}.
  \]
\end{theorem}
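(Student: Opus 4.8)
The plan is to mirror the structure of the proof of Theorem~\ref{th-upper-max-2}: first bound the number of distinct cores $M_G$ arising from type~3 maximal subgroups of index~$n$, and then multiply by the bound $n^2$ for the number of core-free maximal subgroups of index~$n$ in each primitive quotient provided by Theorem~\ref{th-b-2}. A maximal subgroup $M$ of type~3 and index~$n$ yields a primitive quotient $G/M_G$ of type~3, which by Baer's theorem has exactly two minimal normal subgroups $N_1$ and $N_2$, each a non-abelian chief factor of order~$n$ with $\Soc(G/M_G)=N_1\times N_2$; these are isomorphic as abstract groups and $G$-connected, but not $G$-isomorphic. First I would record, by the same argument as in the proof of Theorem~\ref{th-upper-max-2} using \cite[Proposition~1.2.4]{BallesterEzquerro06}, that $M_G=\Cent_G\bigl(\Soc(G/M_G)\bigr)=\Cent_G(N_1)\cap\Cent_G(N_2)$ and that the centraliser of a non-abelian chief factor depends only on its $G$-isomorphism class.

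The central combinatorial step is to attach to each such core the unordered pair $\{[N_1],[N_2]\}$ of $G$-isomorphism classes of $N_1$ and $N_2$, and to show this assignment is injective. Indeed, if two type~3 maximal subgroups $M$ and $U$ of index~$n$ determine the same unordered pair of classes, then the (unordered) pairs of centralisers $\{\Cent_G(N_1),\Cent_G(N_2)\}$ coincide, whence $M_G=U_G$; so distinct cores produce distinct pairs. Because $N_1$ and $N_2$ are $G$-connected, they lie in a common crown of non-abelian chief factors of order~$n$, say the $j$-th one, which is a direct product of $t_j$ chief factors. Each of $N_1,N_2$ is $G$-isomorphic to one of these $t_j$ factors and the two lie in different $G$-isomorphism classes, so the pairs coming from crown~$j$ are unordered pairs of distinct classes among the at most $t_j$ classes realised there; their number is therefore at most $\tfrac{1}{2}t_j(t_j-1)$.

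Summing over the $s_n$ crowns of order~$n$, the number of distinct cores of type~3 maximal subgroups of index~$n$ is at most $\sum_{j=1}^{s_n}\tfrac{1}{2}t_j(t_j-1)$. Finally, a type~3 maximal subgroup of index~$n$ with core $M_G$ corresponds to a core-free maximal subgroup of index~$n$ of the primitive group $G/M_G$, and by Theorem~\ref{th-b-2} there are at most $n^2$ of these for each fixed core. Multiplying the two bounds yields the claimed estimate $n^2\sum_{j=1}^{s_n}\tfrac{1}{2}t_j(t_j-1)$.

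The step I expect to be the main obstacle is the crown-membership and injectivity argument: I must make sure that the two minimal normal subgroups of $G/M_G$ are genuinely $G$-connected, so that they sit inside a single crown counted by $s_n$ and $t_j$, and that the core is recovered exactly from the unordered pair of their $G$-isomorphism classes, so that $\tfrac{1}{2}t_j(t_j-1)$ is a valid over-count rather than an under-count per crown. Pinning down the precise relationship between the minimal normal subgroups of the primitive quotient and the chief factors composing the crown—in particular that every class that occurs is represented among the $t_j$ factors—is where the crown machinery of \cite[Chapter~1]{BallesterEzquerro06} must be invoked with care.
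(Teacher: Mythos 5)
Your proposal is correct and takes essentially the same route as the paper's proof: it identifies the core of a type~3 maximal subgroup as the intersection of the centralisers of the two $G$-connected, non-$G$-isomorphic minimal normal subgroups of $G/M_G$, counts the possible cores crown by crown as unordered pairs of the $t_j$ factors (giving $t_j(t_j-1)/2$ per crown), and multiplies by the $n^2$ bound of Theorem~\ref{th-b-2}. Your explicit injectivity argument (core determined by the unordered pair of $G$-isomorphism classes) is just a more detailed rendering of the step the paper states tersely.
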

\begin{proof}
  Given a maximal subgroup $U$ of $G$ of type~3 and index~$n$, the quotient $G/U_G$ is a primitive group of type~$3$.

  The socle of a primitive quotient of type~3 of $G$ is a direct product of two minimal normal subgroups that are $G$-isomorphic to two $G$-connected chief factors of $G$ belonging to the same $G$-crown. Since the core of this maximal subgroup is the intersection of the centralisers of the minimal normal subgroups of the quotient, that are the centralisers of the chief factors appearing in the crown, we have that the number of choices for the core is $t_i(t_i-1)/2$, where $t_i$ is the number of chief factors in the corresponding crown. This shows that the number of cores of such maximal subgroups is bounded by $\sum_{j=1}^{s_n}t_j(t_j-1)/2$. By Theorem~\ref{th-b-2}, each of the quotients by these cores has at most $n^2$  core-free maximal subgroups of index~$n$. The result follows.
\end{proof}
\begin{corollary}\label{corol-upper-max-3-bis}
  Let $G$ be a $d$-generated group and let $n$ be a natural number
  which is a power of the order of a non-abelian simple group. The number of
  maximal subgroups of $G$ of type~3 and index~$n$ is bounded by
  \[n^2\left(\frac{\rkm_n(G)(\rko_n(G)-s_n)}{2}\right)\le n^{d+2}\left(\frac{\rko_n(G)-s_n}{2}\right),\]
  where $s_n$ is the number of $G$-crowns of $G$ associated to non-abelian chief factors.
\end{corollary}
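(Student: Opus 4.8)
The plan is to derive Corollary~\ref{corol-upper-max-3-bis} from Theorem~\ref{th-upper-max-3} by bounding the combinatorial sum $\sum_{j=1}^{s_n} t_j(t_j-1)/2$ in terms of the invariants $\rkm_n(G)$ and $\rko_n(G)$. Recall that $t_j$ is the number of chief factors in the $j$-th crown of non-abelian factors of order~$n$, that $\rkm_n(G)$ is by definition the maximum of these crown lengths, so $t_j \le \rkm_n(G)$ for every $j$, and that $\rko_n(G)$ counts the total number of non-abelian chief factors of order~$n$ in a chief series, so that $\sum_{j=1}^{s_n} t_j = \rko_n(G)$ (every such factor lies in exactly one crown).

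**First I would** rewrite each summand as $t_j(t_j-1)/2 \le \tfrac{1}{2}(\rkm_n(G)-1)\,t_j$, using $t_j - 1 \le \rkm_n(G) - 1$ and keeping one factor of $t_j$. Summing over $j$ and pulling the constant out gives
\[
\sum_{j=1}^{s_n}\frac{t_j(t_j-1)}{2} \le \frac{\rkm_n(G)-1}{2}\sum_{j=1}^{s_n}t_j = \frac{(\rkm_n(G)-1)\,\rko_n(G)}{2}.
\]
This is close to the target but not yet in the stated form $\rkm_n(G)(\rko_n(G)-s_n)/2$. To reconcile the two, I would instead bound $t_j - 1 \le \rkm_n(G) - 1$ but split the telescoping differently: write $t_j(t_j-1) = \rkm_n(G)\,t_j - \rkm_n(G)\,t_j + t_j(t_j-1)$ and observe that the cleanest route is $t_j(t_j-1)/2 \le \tfrac{1}{2}\rkm_n(G)(t_j - 1)$, which holds because $t_j \le \rkm_n(G)$. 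Summing and using $\sum_j (t_j - 1) = \rko_n(G) - s_n$ yields exactly
\[
\sum_{j=1}^{s_n}\frac{t_j(t_j-1)}{2} \le \frac{\rkm_n(G)(\rko_n(G)-s_n)}{2},
\]
which, multiplied by $n^2$ from Theorem~\ref{th-upper-max-3}, gives the first inequality of the corollary.

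**For the second inequality** I would invoke the preceding lemma, which establishes $\rkm_n(G) \le n^d$ for a $d$-generated group when $n$ is a power of the order of a non-abelian simple group. Substituting this into $n^2\,\rkm_n(G)(\rko_n(G)-s_n)/2$ replaces the factor $\rkm_n(G)$ by $n^d$ and turns the prefactor $n^2$ into $n^{d+2}$, producing $n^{d+2}(\rko_n(G)-s_n)/2$ as required. **The only point requiring care** is the algebraic identity $\sum_{j=1}^{s_n}(t_j - 1) = \rko_n(G) - s_n$, which is immediate once one confirms that the crowns associated to non-abelian chief factors of order~$n$ partition the collection of such factors counted by $\rko_n(G)$; this is exactly the content of $\sum_j t_j = \rko_n(G)$ together with there being $s_n$ crowns. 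I expect no genuine obstacle here, since both the combinatorial identities and the cited lemma are already in place; the corollary is essentially a bookkeeping consolidation of Theorem~\ref{th-upper-max-3}.
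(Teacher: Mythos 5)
Your proposal is correct and takes essentially the same route as the paper: the paper's proof simply observes that $\rko_n(G)=\sum_{i=1}^{s_n}t_i$ and $\rkm_n(G)=\max_{i=1}^{s_n}t_i$ and declares the corollary an immediate consequence of Theorem~\ref{th-upper-max-3}, while you make explicit the algebra behind that word ``immediate'', namely $t_j(t_j-1)/2\le \rkm_n(G)(t_j-1)/2$ summed over $j$ together with $\sum_j(t_j-1)=\rko_n(G)-s_n$, and then $\rkm_n(G)\le n^d$ from the preceding lemma for the second inequality. The brief detour through the weaker bound $(\rkm_n(G)-1)\rko_n(G)/2$ is harmless, since you correctly discard it in favour of the inequality that yields the stated form.
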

\begin{proof}
  Since, with the notation of Theorem~\ref{th-upper-max-3}, we have that $\rko_n(G)=\sum_{i= 1}^{s_n}t_i$ and $\rkm_n(G)=\max_{i=1}^{s_n}t_i$, this result is an immediate consequence of Theorem~\ref{th-upper-max-3}.
\end{proof}

\subsection{General bounds}

Denote by $\mathbb{T}$ the set of all prime powers greater than $1$
and 
by $\mathbb{S}$ the set of all powers of the orders of non-abelian
simple groups.
As a consequence of Corollary~\ref{corol-max-type-1},
Theorem~\ref{th-upper-max-2}, and Theorem~\ref{th-upper-max-3}, we obtain the
following conclusion.
\begin{theorem}\label{th-mn-long}
  The number ${\maxi_n(G)}$ of maximal subgroups of index~$n$ of a $d$-generated  group $G$ satisfies the following bounds:
  \begin{enumerate}
  \item If $n\in\mathbb{T}$, then
    \begin{align*}
      {\maxi_n(G)}&\le
    (n^d-1){{\abcr_n(G)}+n^2{\rks_n(G)}}.
  \end{align*}
  \label{en-mn-long-1}
  \item If $n\in\mathbb{S}$, then
    \begin{align*}
      {\maxi_n(G)}&\le
                    n^2{{\rks_n(G)}}+n^2\left(\frac{\rkm_n(G)\rko_n(G)}{2}\right)\\
      &\le n^2{\rks_n(G)}+n^{d+2}\left(\frac{\rko_n(G)}{2}\right).\\
    \end{align*}
    \label{en-mn-long-2}
  \item If $n\notin\mathbb{S}\cup\mathbb{T}$, then 
    \[{\maxi_n(G)}\le n^2{{\rks_n(G)}}.
    \]\label{en-mn-long-3}
  \end{enumerate}
\end{theorem}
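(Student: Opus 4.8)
The plan is to assemble Theorem~\ref{th-mn-long} as a straightforward case analysis by partitioning the maximal subgroups of index~$n$ according to their Baer type and then summing the bounds already established for each type. Every maximal subgroup $M$ of $G$ induces a primitive quotient $G/M_G$ which, by Baer's theorem, is of type~1, type~2, or type~3; accordingly $M$ is of exactly one of these three types. Thus $\maxi_n(G)$ is the sum of the number of maximal subgroups of index~$n$ of type~1, of type~2, and of type~3. I would invoke Corollary~\ref{corol-max-type-1} for the type~1 count, Theorem~\ref{th-upper-max-2} for the type~2 count, and Theorem~\ref{th-upper-max-3} (together with Corollary~\ref{corol-upper-max-3-bis}) for the type~3 count, and simply add.

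The three cases of the statement arise from restricting which types can actually occur for a given~$n$. First I would recall the elementary observation, already noted after the definition of $\abcr_n(G)$, that a maximal subgroup of type~1 has index equal to the order of a complemented abelian chief factor, hence a prime power; so type~1 subgroups contribute only when $n\in\mathbb{T}$. Similarly, a maximal subgroup of type~3 has as its core the centraliser of the socle of a primitive quotient of type~3, whose socle is a direct product of two $G$-connected non-abelian chief factors; such a factor has order a power of the order of a non-abelian simple group, so type~3 subgroups contribute only when $n\in\mathbb{S}$. Type~2 subgroups, whose count is governed by $\rks_n(G)$, may occur for any~$n$.

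With this in place each case follows by adding the relevant contributions. For $n\in\mathbb{T}$, the possible types are $1$ and $2$, giving $(n^d-1)\abcr_n(G)+n^2\rks_n(G)$ as in part~\ref{en-mn-long-1}. For $n\in\mathbb{S}$, the possible types are $2$ and $3$; the type~3 term is bounded by $n^2\bigl(\rkm_n(G)\rko_n(G)/2\bigr)$ via Corollary~\ref{corol-upper-max-3-bis}, and the coarser estimate $\rkm_n(G)\le n^d$ from the preceding lemma yields the second inequality of part~\ref{en-mn-long-2}. For $n\notin\mathbb{S}\cup\mathbb{T}$, neither type~1 nor type~3 can occur, so only the type~2 bound $n^2\rks_n(G)$ of part~\ref{en-mn-long-3} remains.

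I do not expect a serious obstacle here, since the substantive work is entirely contained in the earlier results; the theorem is a bookkeeping consolidation. The one point requiring care is the disjointness of the three classes and the exhaustiveness argument that pins down, for each $n$, precisely which types are admissible. In particular I would want to state cleanly that an index $n\notin\mathbb{T}$ rules out type~1 and that $n\notin\mathbb{S}$ rules out type~3, so that the dropped terms are genuinely zero rather than merely bounded. Once these exclusions are justified, the displayed inequalities are immediate sums of the cited bounds.
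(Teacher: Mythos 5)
Your proposal is correct and takes exactly the paper's approach: the paper states this theorem with no further argument, as an immediate consequence of Corollary~\ref{corol-max-type-1}, Theorem~\ref{th-upper-max-2}, and Theorem~\ref{th-upper-max-3}, which is precisely the type-by-type decomposition and summation you describe. Your explicit justification of which types can occur for a given $n$ (in particular that $\mathbb{T}$ and $\mathbb{S}$ are disjoint, because the order of a non-abelian simple group is never a prime power, so the dropped terms really vanish) is a point of care that the paper leaves implicit.
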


We can use the bound $A+B\le 2\max\{A,B\}$ to obtain the following
bounds for $\maxi_n(G)$:
  \begin{enumerate}
  \item If $n\in\mathbb{T}$, then
    \begin{align*}
      {\maxi_n(G)}&\le 2\max\{n^d{\abcr_n(G)},n^2{\rks_n(G)}\}.
  \end{align*}
  \label{en-mn-long-1p}
  \item If $n\in\mathbb{S}$, then
    \[
    {\maxi_n(G)}\le n^2\max\left\{2\rks_n(G), \rkm_n(G)\rko_n(G)\right\}.
    \]
  \item If $n\notin\mathbb{S}\cup\mathbb{T}$, then 
    \[{\maxi_n(G)}\le n^2{{\rks_n(G)}}.\]\label{en-mn-long-3p}
  \end{enumerate}

Therefore we can obtain the following bound for $\mathcal{M}(G)$.
\begin{align*}
  \mathcal{M}(G)&=\max_n\log_n\maxi_n(G)\\
  &\le \max_n\Bigl\{\max\{d+\log_n2+\log_n\abcr_n(G),
  2+\log_n2+\log_n\rks_n(G)\mid n\in\mathbb{T}\},\\
  &\qquad\qquad 2+\max\bigl\{\log_n2+\log_n\rks_n(G),\\
                &\qquad\qquad\qquad\qquad\!\log_n\rkm_n(G)+\log_n\rko_n(G)\mid
  n\in\mathbb{S}\bigr\},\\
  &\qquad\qquad 2+\max\{\log_n\rks_n(G)\mid n\notin\mathbb{S}\cup
  \mathbb{T}\}\Bigr\}\\
  &\le \max\Bigl\{d+\max_{n\in\mathbb{T}}\{\log_n2+\log_n\abcr_n(G)\},\\
  &\qquad\qquad
  2+\max_n\{\log_n2+\log_n\rks_n(G)\},\\
                &\qquad\qquad 2+\max_{n\in\mathbb{S}}\bigl\{\log_n\rkm_n(G)+\log_n\rko_n(G)\}\Bigr\}\\
  &\le \max\Bigl\{d+\max_{n\in\mathbb{T}}\{\log_n2+\log_n\abcr_n(G)\},\\
  &\qquad\qquad
  2+\max_n\{\log_n2+\log_n\rks_n(G)\},\\
                &\qquad\qquad 2+d+\max_{n\in\mathbb{S}}\bigl\{\log_n\rko_n(G)\}\Bigr\}.
\end{align*}
We are in a position to prove Theorem~\ref{th-Lub-A}.

\begin{proof}[Proof of Theorem~\ref{th-Lub-A}]
  Note that $\abcr_n(G)\le r$, $\rks_n(G)\le r$, $\rko_n(G)\le r$, and $\rkm_n(G)\le n^d$.
  By Theorem~\ref{th-mn-long}, if $n\in\mathbb{T}$, then
  \[\maxi_n(G)\le n^dr+n^2r\le 2n^dr\le n^{d+2}r%
    ;\]
  if $n\in \mathbb{S}$, then
  \[\maxi_n(G)\le n^2r+n^2rn^d/2\le n^{d+2}r%
    ,\]
  and if $n\notin\mathbb{S}\cup\mathbb{T}$, then
  \[\maxi_n(G)\le n^2r\le n^{d+2}r%
    .\qedhere\]

\end{proof}
We can obtain from Theorem~\ref{th-mn-long} the value of the function of the upper bound of
Theorem~\ref{th-A}.
\begin{theorem}\label{th-A-detailed}
  Let $G$ be a $d$-generated non-trivial group. Then, for
  \begin{align*}
  \eta(G)
  &:= \max\Bigl\{d+2.02+\max_{n\in\mathbb{T}}\{\log_n2+\log_n\abcr_n(G)\},\\
  &\qquad\qquad\!
  4.02+\max_n\{\log_n2+\log_n\rks_n(G)\},\\
    &\qquad\qquad\!4.02+\max_{n\in\mathbb{S}}\bigl\{\log_n\rkm_n(G)+\log_n\rko_n(G)\}\Bigr\},
\end{align*}
and
  \begin{align*}
  \kappa(G)
  &:= \max\Bigl\{d+2.02+\max_{n\in\mathbb{T}}\{\log_n2+\log_n\abcr_n(G)\},\\
  &\qquad\qquad\!
  4.02+\max_n\{\log_n2+\log_n\rks_n(G)\},\\
    &\qquad\qquad\!4.02+d+\max_{n\in\mathbb{S}}\bigl\{\log_n\rko_n(G)\}\Bigr\},
\end{align*}
we have that
\[\invnu(G)\le \eta(G)\le \kappa(G).\]
\end{theorem}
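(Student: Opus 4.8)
The plan is to combine the explicit upper bound for $\mathcal{M}(G)$ derived just before the statement with the right-hand inequality $\invnu(G)\le\mathcal{M}(G)+2.02$ of Theorem~\ref{th-Lubotzky-M}. First I would recall that the chain of inequalities preceding this theorem already establishes
\begin{align*}
  \mathcal{M}(G)&\le\max\Bigl\{d+\max_{n\in\mathbb{T}}\{\log_n2+\log_n\abcr_n(G)\},\\
  &\qquad\qquad 2+\max_n\{\log_n2+\log_n\rks_n(G)\},\\
  &\qquad\qquad 2+\max_{n\in\mathbb{S}}\bigl\{\log_n\rkm_n(G)+\log_n\rko_n(G)\}\Bigr\}.
\end{align*}
Adding $2.02$ to every entry inside the outer maximum and invoking Theorem~\ref{th-Lubotzky-M} then yields $\invnu(G)\le\mathcal{M}(G)+2.02\le\eta(G)$, since the three terms of $\eta(G)$ are exactly these three entries with $2.02$ added (note $2+2.02=4.02$ in the second and third terms). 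This disposes of the first inequality.

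For the second inequality $\eta(G)\le\kappa(G)$, I would compare the two displays term by term. The first two terms of $\eta(G)$ and $\kappa(G)$ are literally identical, so only the third terms differ. It therefore suffices to show
\[
  4.02+\max_{n\in\mathbb{S}}\bigl\{\log_n\rkm_n(G)+\log_n\rko_n(G)\}\le
  4.02+d+\max_{n\in\mathbb{S}}\bigl\{\log_n\rko_n(G)\},
\]
for which I would use the bound $\rkm_n(G)\le n^d$ proved in the lemma immediately above the statement. Taking $\log_n$ gives $\log_n\rkm_n(G)\le d$ for every $n\in\mathbb{S}$, whence for each such $n$ we have $\log_n\rkm_n(G)+\log_n\rko_n(G)\le d+\log_n\rko_n(G)$. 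Passing to the maximum over $n\in\mathbb{S}$ on both sides and adding $4.02$ produces the required inequality between the third terms, and since the third term of $\kappa(G)$ dominates the corresponding term of $\eta(G)$ while the other two agree, the outer maxima satisfy $\eta(G)\le\kappa(G)$.

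I do not anticipate any serious obstacle here, as the result is essentially a bookkeeping assembly of inequalities already in hand. The only point requiring minor care is the constant tracking: one must confirm that adding $2.02$ to the $\mathcal{M}(G)$ bound reproduces precisely the constants $d+2.02$, $4.02$, $4.02$ appearing in $\eta(G)$, and that the monotonicity of $\max$ under termwise domination is applied correctly when comparing $\eta(G)$ with $\kappa(G)$. The substantive mathematical content—the bounds on $\maxi_n(G)$ feeding into $\mathcal{M}(G)$, and the bound $\rkm_n(G)\le n^d$—has already been established, so the proof of this theorem is purely a matter of stitching these together via Theorem~\ref{th-Lubotzky-M}.
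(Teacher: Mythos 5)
Your proposal is correct and follows exactly the route the paper intends: the bound on $\mathcal{M}(G)$ assembled from Theorem~\ref{th-mn-long} just before the statement, combined with the inequality $\invnu(G)\le\mathcal{M}(G)+2.02$ of Theorem~\ref{th-Lubotzky-M}, gives $\invnu(G)\le\eta(G)$, and the comparison $\eta(G)\le\kappa(G)$ via $\rkm_n(G)\le n^d$ is precisely the last step of the paper's own chain of inequalities for $\mathcal{M}(G)$.
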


\begin{remark}
  The terms $\log_n2$ in  Theorem~\ref{th-A-detailed}
  depend on the bound $A+B\le 2\max\{A, B\}$. If $A=0$ or $B=0$, then
  $A+B=\max\{A,B\}$. In particular, in the computation of the maxima the terms
  $\log_n2$ appear only if $n\in \mathbb{T}$ and
  ${\abcr_n(G)}{\rks_n(G)}\ne 0$, or $n\in\mathbb{S}$ and
  ${\rks_n(G)}{\rko_n(G)}\ne 0$. In all other cases, $\log_n2$ can be
  safely removed to estimate $\invnu(G)$.
\end{remark}

\section{Discussion}\label{sect-comp}

The aim of this section is to show that the value of the constant in the upper bound of
Theorem~\ref{th-DetomiLucchini03-20} %
appears as a 
consequence of Theorem~\ref{th-A} and to present a construction of groups with a large number of crowns of chief factors whose associated primitive quotients are isomorphic to a given primitive group of type~1.

\subsection{Determination of the constant of Theorem~\ref{th-DetomiLucchini03-20}}
We can use the upper bound of Theorem~\ref{th-A} to determine the value of the constant~$c$ of Theorem~\ref{th-DetomiLucchini03-20}.
\begin{theorem}
  The bounds of Theorem~\ref{th-DetomiLucchini03-20} hold with $c=5.02$.
\end{theorem}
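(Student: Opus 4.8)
The plan is to derive the constant $c = 5.02$ by comparing the bound of Theorem~\ref{th-DetomiLucchini03-20}, namely $\invnu(G) \le \lfloor \dg(G) + c\log\lambda(G)\rfloor$, against the bound $\invnu(G) \le \kappa(G)$ supplied by Theorem~\ref{th-A-detailed}. The governing identity is that each of the invariants $\abcr_n(G)$, $\rks_n(G)$, $\rko_n(G)$ counts certain (isomorphism classes of) non-Frattini chief factors, each of which is crudely bounded above by $\lambda(G)$, the total number of non-Frattini chief factors in a chief series. First I would record these coarse inequalities: $\abcr_n(G) \le \lambda(G)$, $\rks_n(G) \le \lambda(G)$, and $\rko_n(G) \le \lambda(G)$, so that every occurrence of $\log_n$ of one of these invariants is at most $\log_n\lambda(G) = \log\lambda(G)/\log n \le \log\lambda(G)$, using $n \ge 2$.

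Next I would feed these into the three branches of $\kappa(G)$. In the first branch the relevant quantity is $d + 2.02 + \log_n 2 + \log_n\abcr_n(G)$; since $\log_n 2 \le 1$ and $\log_n\abcr_n(G) \le \log\lambda(G)$, this branch is bounded by $d + 3.02 + \log\lambda(G)$. In the second branch the quantity $4.02 + \log_n 2 + \log_n\rks_n(G)$ is bounded by $4.02 + 1 + \log\lambda(G) = 5.02 + \log\lambda(G) \le d + 5.02 + \log\lambda(G)$ (using $d \ge 1$, and recalling $d \ge 2$ is the standing hypothesis of Theorem~\ref{th-A}). In the third branch the quantity $4.02 + d + \log_n\rko_n(G)$ is bounded by $d + 4.02 + \log\lambda(G)$. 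Taking the maximum of the three branches gives $\kappa(G) \le d + 5.02 + \log\lambda(G)$, and hence $\invnu(G) \le d + 5.02\log\lambda(G) + \log\lambda(G)$ is not quite the desired shape; the point I must be careful about is that the target has the form $d + c\log\lambda(G)$ with a single logarithmic term, not an additive constant plus a logarithmic term.

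The main obstacle, then, is the discrepancy between an additive bound of the form $d + (\text{const}) + \log\lambda(G)$ and the multiplicative-looking bound $d + c\log\lambda(G)$. The resolution is the case split already present in Theorem~\ref{th-DetomiLucchini03-20}: when $\lambda(G) > 1$ one has $\log\lambda(G) \ge \log 2 = 1$, so the constant $5.02$ can be absorbed into $5.02\log\lambda(G)$, giving $5.02 + \log\lambda(G) \le 5.02\log\lambda(G) + \log\lambda(G) \le 6.02\log\lambda(G)$; here one checks that $c = 5.02$ indeed suffices once the constant term and the single $\log\lambda(G)$ are combined, using $\log\lambda(G)\ge 1$. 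When $\lambda(G) = 1$ the logarithmic term vanishes and the bound reduces to $\invnu(G) \le d + 5.02$, which is exactly the second clause of Theorem~\ref{th-DetomiLucchini03-20} with $c = 5.02$.

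Finally I would take defect integer parts: since $\invnu(G)$ is a positive integer and $\invnu(G) \le \kappa(G)$, we have $\invnu(G) \le \lfloor\kappa(G)\rfloor \le \lfloor \dg(G) + 5.02\log\lambda(G)\rfloor$ in the case $\lambda(G) > 1$, and $\invnu(G) \le \lfloor \dg(G) + 5.02\rfloor$ otherwise. The delicate step that I expect to require the most care is verifying that the bound $\log_n 2 \le 1$ together with the three separate additive constants $2.02$ and $4.02$ really do collapse into the single coefficient $5.02$ under the hypothesis $\lambda(G) > 1$; in particular one must confirm that the second branch, which carries the largest additive constant, is the binding one and that $4.02 + 1 = 5.02$ is exactly the figure that propagates through the $\lambda(G)\ge 2$ absorption.
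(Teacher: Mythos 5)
Your overall strategy is the same as the paper's: bound $\abcr_n(G)$, $\rks_n(G)$, $\rko_n(G)$ by $\lambda(G)$, feed this into the three branches of $\kappa(G)$ from Theorem~\ref{th-A-detailed}, split into the cases $\lambda(G)=1$ and $\lambda(G)>1$, and absorb additive constants using $\log\lambda(G)\ge 1$. But your arithmetic does not establish $c=5.02$; it establishes $c=6.02$. Your binding estimate is $\kappa(G)\le d+5.02+\log\lambda(G)$, coming from your second branch, where you only ever add a nonnegative $d$ to $5.02+\log\lambda(G)$ rather than exploiting $d\ge 2$. To pass from $d+5.02+\log\lambda(G)$ to $d+5.02\log\lambda(G)$ you would need $5.02+\log\lambda(G)\le 5.02\log\lambda(G)$, and this is \emph{false} when $\lambda(G)=2$: it reads $6.02\le 5.02$. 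Your own displayed chain $5.02+\log\lambda(G)\le 6.02\log\lambda(G)$ concedes exactly this, and the slip matters after flooring, since $\lfloor d+6.02\rfloor=d+6>d+5=\lfloor d+5.02\log 2\rfloor$. The sentence asserting that ``one checks that $c=5.02$ indeed suffices'' is precisely the step with no proof behind it.

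The missing idea is the one the paper uses (and flags parenthetically): whenever the second branch is nonvacuous, i.e.\ $\rks_n(G)\ne 0$ for some $n$, the group $G$ has a non-abelian chief factor, hence is non-cyclic, hence $d=\dg(G)\ge 2$. This lets you absorb $2$ of the constant into $d$: the second branch satisfies $4.02+\log_n2+\log_n\rks_n(G)\le 5.02+\log\lambda(G)\le d+3.02+\log\lambda(G)\le d+4.02\log\lambda(G)$ (the paper sharpens this further by noting that $\rks_n(G)\ne 0$ also forces $n\ge 5$). With that correction, the binding branch is the \emph{third} one, not the second: $4.02+d+\log_n\rko_n(G)\le d+4.02+\log\lambda(G)\le d+5.02\log\lambda(G)$, using $\log\lambda(G)\ge 1$. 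This is where $5.02$ really comes from: it is $4.02$ plus the coefficient $1$ of $\log\lambda(G)$ in the third branch, not your ``$4.02+\log_n2\le 5.02$'' in the second branch. Your handling of the case $\lambda(G)=1$ and the final flooring step are fine.
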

\begin{proof}%
  Suppose first that the number $\lambda(G)$ of non-Frattini chief
  factors in a given chief series of~$G$ is $1$. In this case,
  $G/\Phi(G)$ is the only non-Frattini chief factor of~$G$ and
  $G/\Phi(G)$ must be simple, either abelian or non-abelian. If
  $G/\Phi(G)$ is abelian, then $G$ is cyclic and $G$ has exactly one crown
  of non-Frattini abelian chief factors. Therefore $\invnu(G)\le
  d+\log_22+2.02=d+3.02$ (note that the term $\log_22$ can even be
  safely removed from this sum). Assume now that $G/\Phi(G)$ is
  non-abelian. It follows that $\rks_n(G)=1$, where $n$ is the index of a maximal subgroup of~$G/\Phi(G)$
  and so $\invnu(G)\le 4.02+\log_52< 5.02$
  (we could even remove the term $\log_52$). In both cases, $\invnu(G)\le
  \lfloor d+3.02\rfloor$.

  Now assume that $\lambda(G)>1$. It will be enough to show that for a
  constant~$c$, all three elements whose maximum is being considered
  are bounded by $d+c\log \lambda(G)$ for a constant $c$. Note also
  that $\abcr_n(G)$, $\rks_n(G)$, $\rko_n(G)$, and $\rks_n(G)$ are
  less than or equal to $\lambda(G)$. The first term satisfies

  \begin{align*}
    d+2.02+\max_n\{\log_n2+\log_n\abcr_n(G)\}&\le
  d+2.02+1+\log\lambda(G)\\&\le d+4.02\log\lambda(G).
  \end{align*}
  The second term satisfies the inequality
  \begin{align*}
    4.02+\max_n\{\log_n2+\log_n\rks_n(G)\}&\le
    4.02+\log_52+\frac{\log\lambda(G)}{\log5}\\
    &\le
    d+\frac{2.02+\log_52+1}{\log5}\log\lambda(G)\\&\le
    d+2.31\log\lambda(G)
  \end{align*}
  (recall that we can assume $d\ge 2$ because $G$ is not
  cyclic). Finally, the third term satisfies
  \[
    4.02+d+\max_{n\in\mathbb{S}}\bigl\{\log_n\rko_n(G)\}\le 4.02+d+\lambda(G)\]
  Therefore, Theorem~\ref{th-DetomiLucchini03-20} holds with $c=5.02$.
\end{proof}

\subsection{Some examples and constructions}
We conclude this section by presenting a couple of
examples in which our bound improves substantially the previously known bounds.%

In the following we will construct some groups with many crowns of chief factors whose associated primitive quotients are isomorphic to a given monolitic primitive group. This construction will depend on a a general construction for subdirect products.
\begin{construction}\label{const-subd}
  Suppose that $G_i=\langle a_{i1},\dots, a_{id}\rangle$, $1\le i\le r$ are $d$-generated groups. Let $D=\prod_{i=1}^rG_i$ be the direct product of the groups $G_i$, $1\le i\le r$. Consider $a_j=\prod_{i=1}^ra_{ij}\in D$, $1\le j\le d$. Let $\hat G=\langle a_1,\dots,a_r\rangle\le D$. We have that the restriction to $\hat G$ of the natural projection $\pi_i\colon D\longrightarrow G_i$,  $1\le i\le r$, is a group epimorphism with kernel $\hat G\cap \prod_{k\ne i}G_k$. Hence each chief factor $H_i/K_i$ of~$G_i$ is isomorphic to a chief factor $H/K$ of~$\hat G$. Furthermore, $G_i$ acts on $H_i/K_i$ as $\hat G$ acts on $H/K$. Moreover, $\bigcap_{i=1}^r (\hat G\cap \prod_{k\ne i}G_k)=1$, and so $\hat G$ is a subdirect product of the subgroups $G_i$. Consequently, each chief factor of $\hat G$ is isomorphic to a chief factor of one of the $G_i$. However, it might happen that the number of chief factors of $\hat G$ is less than the number of chief factors of all the $G_i$. For instance, this happens if $G_i=\langle a_{1i}\rangle\cong C_p$, $1\le i\le 2$: in this case, $\hat G\cong C_p$.
\end{construction}

The following construction for $d$-generated groups with all possible crowns whose associated primitive quotients are isomorphic to a given monolitic primitive group is motivated by an argument of Hall \cite{Hall36-qjm}.
\begin{construction}\label{const-hat}
    Let $G$ be a $d$-generated group. We can consider the set $\Omega$ of all
  generating ordered $d$-tuples of $G$. The group $\Aut G$ acts
  on~$\Omega$ in the natural way. This decomposes $\Omega$ into orbits
  for the action of $\Aut G$, $\Omega=\Omega_1\cup\dots\cup\Omega_r$
  say. Let $(a_{i1},\dots, a_{id})$ be an element of the orbit
  $\Omega_i$, $1\le i\le r$. We can apply Construction~\ref{const-subd} to $G_i=\langle a_{i1},\dots, a_{id}\rangle=G$, $1\le i\le r$, to obtain $\hat G=\langle a_1,\dots, a_d\rangle$. If $G$ is a primitive group of
  type~1 and socle~$N$, then the socle of $\hat G$ is the  product
  of all faithful and irreducible modules for~$G/N$ such that the
  corresponding primitive group is isomorphic to~$G$.
\end{construction}
\begin{example}\label{ex-many-crowns}
  There are three isomorphism classes of $2$-generated primitive
  groups of type~1 with socle of order~$8$, namely $G_1=[C_2^3]C_7$,
  $G_2=[C_2^3][C_7]C_3$ and $G_3=[C_2^3]{\GL_3(2)}$. We can construct
  $2$-generated groups 
  $\hat G_1$, $\hat G_2$, and $\hat G_3$ with Construction~\ref{const-hat}. By using Construction~\ref{const-subd}, we can construct a
  subdirect product $S$ of $\hat G_1$, $\hat G_2$, and $\hat G_3$ in such
  a way the generating pairs of all these three groups are identified. 

  An application of this construction with \textsf{GAP} \cite{GAP4-11-1} shows that
  \begin{align*}
    \abcr_8(\hat G_1)&=\abcr_8(\hat G_2)=16,&
    \abcr_8(\hat G_3)&=114,\\\abcr_7(\hat G_1)&=1,&\abcr_7(\hat
                                                    G_2)&=8,\\\abcr_3(\hat G_2)&=1,&
    \rk_{\operatorname{GL}_3(2)}(\hat
    G_3)&=57;
  \end{align*}
  the other values of the ranks and the numbers of abelian
  crowns are zero. We note that if $1\le i<j\le 3$, two chief factors of $S$ corresponding to chief factors of $\hat G_i$ and $\hat G_j$, respectively, cannot be $S$-isomorphic, because the groups of automorphisms induced by $S$ on the chief factors in the primitive groups $G_i$ and $G_j$ are not isomorphic. Therefore $\abcr_3(S)=1$, $\abcr_7(S)=9$, $\abcr_8(S)=146$, $\rk_{\operatorname{GL}_3(2)}(S)=57$; the other
    values are zero (we note that, in fact, $S$ coincides with the direct product $\hat G_1\times \hat G_2\times \hat G_3$). Since the indices of the maximal subgroups of
    $\operatorname{GL}_3(2)$ are $7$ and $8$ (see for instance
    \cite{Atlas85}), we conclude that $\rks_7(S)=\rks_8(S)=57$ and
    $\rko_{168}(S)=\rkm_{168}(S)=57$. The crowns of chief factors of order~$8$ in $\hat G_1$ are minimal normal subgroups, in $\hat G_2$ are products of three minimal normal subgroups, while in $\hat G_3$ are products of two minimal normal subgroups. The crowns of chief factors of order~$7$ in $\hat G_1$ and $\hat G_2$ coincide with the corresponding chief factors. There is a unique crown composed of two central chief factors of order~$3$. Hence $S$ has $16+16\cdot 3+114\cdot 2=292$ chief factors of order~$8$, $8$ chief factors of order~$7$, $2$ chief factors of order~$3$, and $57$ chief factors isomorphic to~$\GL_3(2)$.

    By
    Theorem~\ref{th-A} we obtain the bound
    \begin{align*}
      \invnu(S)\le \max\bigl\{&d+2.02+\log_32+\log_3\abcr_3(S),\\
      &d+2.02+\log_72+\log_7\abcr_7(S),\\
      &d+2.02+\log_82+\log_8\abcr_8(S),\\
      &4.02+\log_72+\log_7\rks_7(S),\\
                              &4.02+\log_82+\log_8\rks_8(S),\\
      &4.02+ \log_{168}\rkm_{168}(G)+\log_{168}\rko_{168}(G)
\bigr\},
    \end{align*}
    that is,
    \begin{align*}
      \invnu(S)\le \max\{&4.02+\log_32+\log_31,\\
      &4.02+\log_72+\log_79,\\
      &4.02+\log_82+\log_8146,\\
      &4.02+\log_72+\log_757,\\
      &4.02+\log_82+\log_857,\\
      &4.02+2\log_{168}57\}\le 6.75.
    \end{align*}

    The arguments of Lubotzky would take into account the $2$ abelian chief factors of order~$3$ to obtain the upper bound for $\maxi_3(S)$; the $8$ abelian chief factors of order~$7$ and the $57$ non-abelian chief factors isomorphic to~$\GL_3(2)$ to obtain the upper bound for $\maxi_7(S)$; the $292$ abelian chief factors of order~$8$  and the $57$ chief factors isomorphic to~$\GL_3(2)$ to obtain the upper bound for $\maxi_8(S)$, and the $57$ chief factors isomorphic to~$\GL_3(2)$ and order $168$ to obtain the upper bound for $\maxi_{168}(S)$. We obtain the following bounds:
    \begin{align*}
      \maxi_3(S)&\le ((1/2)(0+1)\cdot 0+2\cdot 3^2)3^2=162,\\
      \maxi_7(S)&\le             ((1/2)(57+1)\cdot 57 + 8\cdot 7^2)7^2=100\,205,\\
      \maxi_8(S)&\le ((1/2)(57+1)\cdot 57 + 292\cdot 8^2)8^2=1\,301\,824,\\
      \maxi_{168}(S)&\le ((1/2)(57+1)\cdot 57+0\cdot 168^2)168^2=46\,654\,272
    \end{align*}
    and so we conclude that
    \begin{align*}
      \invnu(S)&\le 2.02+\max\{\log_3(162), \log_7(100\,205),\\
               &\qquad\qquad\qquad\qquad\log_8(1\,301\,824), \log_{168}(46\,654\,272)\\
      &=2.02+\log_8(1\,301\,824)\le 8.791.
    \end{align*}
    The bounds obtained for the numbers of maximal subgroups of each index with the application of our Theorem~\ref{th-mn-long} are:
    \begin{align*}
      \maxi_3(S)&\le (3^2-1)\abcr_3(S)=8,\\
      \maxi_7(S)&\le (7^2-1)\abcr_7(S)+7^2\rks_7(S)=48\cdot 9+49\cdot 57=3\,225,\\
      \maxi_8(S)&\le (8^2-1)\abcr_8(S)+8^2\rks_8(S)=63\cdot 146+64\cdot 57=12\,846,\\
      \maxi_{168}(S)&\le 168^2\left(\frac{\rkm_{168}(S)(\rko_{168}(S)-s_{168})}{2}\right)\\&=168^2\cdot \left(\frac{57\cdot 56}{2}\right)=45\,045\,504.
    \end{align*}
    In fact, Theorem~\ref{th-nd-abelian-crown} gives the exact value $\maxi_3(S)=(3^2-1)/(3-1)=4$.
  \end{example}

\begin{remark}\label{rem-many-crowns}
  The procedure of Example~\ref{ex-many-crowns} can be followed with
  all isomorphism classes of $d$-generated monolitic primitive groups with a
  given socle to obtain a group with all possible crowns of
  non-Frattini chief factors of a given order. It is enough to consider all isomorphism classes of $d$-generated monolitic primitive groups whose socle has a given order, to use Construction~\ref{const-hat} with each of them, and then to use Construction~\ref{const-subd} with the obtained groups.
\end{remark}

\section*{Acknowledgements}
These results are part of the R+D+i project supported by the Grant 
PGC2018-095140-B-I00, funded by MCIN/AEI/10.13039/501100011033 and by ``ERDF A way of making Europe'', as well as by the Grant PROMETEO/2017/057
funded by GVA/10.13039/501100003359, and partially supported by the Grant E22 20R, funded by Departamento de Ciencia, Universidades y Sociedad del
Conocimiento, Gobierno de Arag\'on/10.13039/501100010067.

\bibliographystyle{alpha}
\bibliography{bibgroup}
\end{document}